\documentclass[12pt,reqno]{amsart}

\setlength{\textwidth}{16.5cm}\oddsidemargin=-0.5cm\evensidemargin=-0.5cm
\setlength{\textheight}{22cm}
%\linespread{1.15}

\usepackage[utf8]{inputenc}
\usepackage[english]{babel}
\usepackage{newlfont}
\usepackage{color}
\usepackage{ amssymb }
\usepackage{graphicx}
\usepackage{amsmath}
\usepackage{scalerel}
\usepackage{stackengine,wasysym}
\usepackage{mathrsfs}
\usepackage{enumitem}
\usepackage{braket}
\usepackage{blindtext}
\usepackage[utf8]{inputenc}
\usepackage{fancyhdr,appendix,psfrag,layout}
\usepackage{hyperref}
\usepackage{url}
\usepackage[capitalise,nameinlink,noabbrev]{cleveref}
\crefname{equation}{}{}
\crefname{enumi}{}{} 
\usepackage{amsfonts}
\usepackage{filecontents}
\usepackage{amsthm}
\usepackage[english]{babel}
\usepackage{fancyhdr}
%\usepackage{natbib}
%\usepackage[style=numeric,sorting=alphabetic]{biblatex}
%\addbibresource{references.bib}
%\hypersetup{
   % colorlinks=true,
    %linkcolor=blue,
    %filecolor=magenta,      
    %urlcolor=cyan,
%}
\addto\captionsenglish{}

\newtheorem{lemma}{Lemma}[section]
\newtheorem{proposition}[lemma]{Proposition}
\newtheorem{theorem}[lemma]{Theorem}

\theoremstyle{definition}

\newtheorem{remark}{Remark}

 \urlstyle{same}

\numberwithin{equation}{section}

\def\SO{\mathrm{SO}}

\begin{document}

\textwidth=450pt\oddsidemargin=0pt

\title{The 2-rank of finite groups acting on  hyperelliptic 3-manifolds}

\author[M. L. Frisch Sbarra]{Max Leopold Frisch Sbarra}
\address{M. L. Frisch Sbarra: School of Mathematics, University of Birmingham, Watson Building, Edgbaston, Birmingham, B15 2TT, England} \email{mxf237@student.bham.ac.uk}

\author[M. Mecchia]{Mattia Mecchia}
\address{M. Mecchia: Dipartimento Di Matematica e Geoscienze, Universit\`{a} degli Studi di Trieste, Via Valerio 12/1, 34127, Trieste, Italy.} \email{mmecchia@units.it}

%\subjclass{}
%\thanks{$^{*}$Partially supported by the FRA 2015 grant  ``Geometria e topologia delle variet\`{a} ed applicazioni'', Universit\`{a} degli Studi di Trieste and by the ``National Group for Algebraic and Geometric Structures, and their Applications'' (GNSAGA�� INdAM)}
%Frisch Sbarra is supported by the School of Mathematics at the University of Birmingham

%\thanks{$^{**}$ Supporti}

%\date{\today}

\begin{abstract}

We consider 3-manifolds admitting the action of an involution such that its space of orbits is  homeomorphic to $S^3.$ Such involutions are called \textit{hyperelliptic} as the manifolds admitting such an action. We prove that the sectional 2-rank of a finite group acting on a 3-manifold and containing a hyperelliptic involution with  fixed-point set with two components has sectional 2-rank at most four; this upper bound is sharp. The cases where the hyperelliptic involution has a fixed-point set with a number of components different from 2 have been already considered in literature.  Our result completes the analysis and we  obtain some  general results where the number of the components of the fixed-point set is not fixed. In particular, we obtain that a finite group acting on a 3-manifold and containing a hyperelliptic involution has 2-rank at most four, and four is the best possible upper bound.

Finally, we restrict to the basic case of simple groups acting on hyperelliptic  3-manifolds: we use our result about the sectional 2-rank to  prove that a simple group containing  a hyperelliptic  involution is isomorphic to $PSL(2,q)$ for some odd prime power $q$, or to one of four other small simple groups.

\end{abstract}

\maketitle
  
\section{Introduction} 

A \textit{hyperelliptic 3-manifold} is a  3-manifold  which admits the action of a \textit{hyperelliptic involution}, i.e.,  an involution with  quotient space homeomorphic to $S^3$. In this paper  3-manifolds are all smooth, closed and orientable, and the actions of groups on 3-manifolds are  smooth and orientation-preserving.

The analogous concept in dimension 2 is a classical research theme. The hyperelliptic involution  of  a Riemann surface of genus at least two is unique and central in the automorphism group; consequently, the class of automorphism groups of hyperelliptic Riemann surfaces is very restricted (liftings of finite groups acting on $S^2$). 

For 3-manifolds the situation is different: we have many examples of finite groups acting on 3-manifolds containing several non-central hyperelliptic involutions, so in general it is not possible to reconstruct the whole group by lifting  a group acting on $S^3$.

In dimension 3, the study of hyperelliptic involutions is related to the representation of 3-manifolds as cyclic branched covers of links.   In particular, a 3-manifold admits the action of a hyperelliptic involution if and only if the  3-manifold is the 2-fold cover of  $S^3$ branched along a link (in brief \textit{the 2-fold branched cover of a link}).  In fact, the fixed-point set of a hyperelliptic involution is a nonempty union of disjoint, simple, closed curves. If $h$ is a hyperelliptic involution acting on $M$, the projection of  the fixed-point set of $h$ to $M/\langle h \rangle\cong S^3$ is  a link and $M$ is the 2-fold branched cover of this link. On the other hand, if $M$ is a 2-fold branched cover of a link, the involution generating the deck transformation group is hyperelliptic. 

%The class of 3-manifolds consisting of  2-fold branched covers of  links has been extensively studied in the literature. 

In the present paper we consider the sectional 2-rank of groups containing a hyperelliptic involution.
We recall the following definition: the  \textit{rank} of a group $G$ is the minimal cardinality of a set of  generators of $G;$  the \textit{$2$-rank} of $G$ is the maximum rank of all elementary abelian $2$-subgroups of $G;$ the sectional \textit{$2$-rank} of $G$ is the maximum rank of all $2$-subgroups of $G.$ When $G$ is a 2-group the sectional \textit{$2$-rank} is simply called  \textit{sectional rank}.

Let $G$ be  a finite group acting on a 3-manifold $M$ and containing a hyperelliptic involution $h$ whose fixed-point set has $r$ components.  The first $\mathbb{Z}_2$-homology group  of $M$  has rank $r-1$ (see, for example, \cite[Sublemma 15.4]{sakuma1995homology}). Hence, a hyperelliptic 3-manifold  determines the number of components of the fixed-point set  of its hyperelliptic involutions and  the situation of different numbers of components can be separately analyzed.

If $r=1$  the manifold $M$ is  $\mathbb{Z}_2$-homology sphere. A finite group acting on a $\mathbb{Z}_2$-homology sphere has  2-rank at most 3 and sectional 2-rank  at most 4.  These upper bounds  can be deduced by \cite{Dotzel-Hamrick} or were proved directly in \cite{reni2000} (see also \cite{mecchia2006finite}). Both  are sharp: there are groups acting on the 3-sphere realizing them.   In \cite{reni2000} these results are used  to prove that there are at most nine inequivalent knots with the same hyperbolic 2-fold branched covers. 

The case $r\geq 3$ has been considered in \cite{mecchia2020finite}, where it is proven that the 2-rank is at most 4 (Proposition 4). Moreover, combining \cite[Proposition 3 and Lemma 5]{mecchia2020finite} with  the result about $\mathbb{Z}_2$-homology sphere, we can obtain that $G$ has sectional 2-rank at most six. This case corresponds to the  2-fold branched covers of links with at least three components, the analysis of finite 2-groups acting on $M$ gives that we have at most three such links with the same hyperbolic 2-fold branched cover (see \cite[Corollary 1]{mecchia2020finite}).   

Until now, the case $r=2$ has not be treated in literature from an algebraic point of view: if the fixed-point set of the hyperelliptic involution 
has two components some additional difficulties arise. There are some results regarding the hyperbolic 2-fold branched covers, but they are obtained by a purely geometric approach (see \cite{mecchia-reni2002}). In the present paper we consider this case, and we prove the following theorem about the sectional 2-rank.

\begin{theorem}\label{main}
Let $M$ be a smooth, closed and orientable 3-manifold, and let $G$ be a finite group which acts on $M$ smoothly and orientation-preservingly. If $G$ contains a hyperelliptic involution whose fixed-point set has two components, then the sectional 2-rank of $G$ is at most 4. 

\end{theorem}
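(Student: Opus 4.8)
The plan is to reduce the statement to a single Sylow $2$-subgroup and then exploit two pieces of geometric input: Smith theory, which limits fixed-point sets, and the linearity of finite actions on $S^3$, which constrains the relevant $2$-groups. First I would observe that the sectional $2$-rank of $G$ equals the sectional rank of a Sylow $2$-subgroup $S$, since every $2$-subgroup is conjugate into $S$; choosing $S$ to contain the hyperelliptic involution $h$, it suffices to show that $d(Q)\le 4$ for every subgroup $Q\le S$, where $d$ denotes the minimal number of generators. At this stage I would record the homological constraints. As noted in the excerpt, $b_1(M;\mathbb{Z}_2)=r-1=1$, so Poincaré duality over $\mathbb{Z}_2$ gives total $\mathbb{Z}_2$-homology dimension $\sum_i\dim_{\mathbb{Z}_2}H_i(M;\mathbb{Z}_2)=4$. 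The fixed-point set of any orientation-preserving involution on $M$ is a disjoint union of $f$ circles, contributing $2f$ to total $\mathbb{Z}_2$-homology, so Smith's inequality forces $f\le 2$; in particular every involution on $M$ fixes at most two circles, and a hyperelliptic one fixes exactly two.

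Next, as a warm-up and to set up the main tool, I would bound the rank of elementary abelian subgroups. If $E\le S$ is elementary abelian and contains $h$, then every element of $E$ commutes with $h$, so $\bar E=E/\langle h\rangle$ acts on $M/\langle h\rangle\cong S^3$; the kernel of this action is the deck group $\langle h\rangle$ of the branched cover, so $\bar E$ acts faithfully. By the geometrization of finite group actions on $S^3$ (the positive solution of the Smith conjecture together with the orbifold theorem), this action is conjugate to an orthogonal one, so $\bar E\hookrightarrow SO(4)$. Since the $2$-rank of $SO(4)$ is $3$, we get $\operatorname{rank}(E)\le 4$. The same linearization is the engine for the sectional bound: setting $K=C_S(h)$, the quotient $\bar K=K/\langle h\rangle$ embeds, after conjugation, as a finite $2$-subgroup of $SO(4)$, hence, via $SO(4)=(S^3\times S^3)/\{\pm1\}$ and the classification of finite $2$-subgroups of $SU(2)$ as cyclic or generalized quaternion, as a subquotient of a product $Q_{2^a}\times Q_{2^b}$. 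I would use this to control the sectional rank of $\bar K$, and therefore of the central extension $K$ by $\langle h\rangle$.

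The remaining and most delicate step is to pass from $K=C_S(h)$ to all of $S$ while keeping the final constant equal to $4$ rather than the weaker bound that a crude extension argument would produce. Here I would analyze the fusion of $h$ in $S$: every $S$-conjugate of $h$ is again hyperelliptic with exactly two fixed circles, and commuting hyperelliptic involutions have geometrically rigid configurations of fixed circles, which I expect to limit severely how $h$ can fail to be central and how the two components of $\operatorname{Fix}(h)$ can be permuted. Wherever a subgroup acts with the homological profile of a $\mathbb{Z}_2$-homology sphere I would invoke the $r=1$ results (\cite{reni2000}, together with the refinements in \cite{mecchia2020finite}) to bound the corresponding section by $4$, and then reassemble.

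I expect this last part to be the main obstacle: controlling the non-central position of $h$, the possible swap of the two fixed circles, and the $\langle h\rangle$-extension simultaneously, so that no section requires a fifth generator, is precisely the delicate bookkeeping that distinguishes $r=2$ from the already-settled cases $r=1$ and $r\ge 3$, and it is where the interplay between the Smith-theoretic bound $f\le 2$ and the linear structure of $\bar K\le SO(4)$ must be pushed through a careful case analysis.
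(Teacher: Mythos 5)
There is a genuine gap: your plan proves a bound, in essence, only for $2$-subgroups centralizing $h$, and the passage to arbitrary $2$-subgroups of $S$ --- which you yourself flag as the main obstacle --- is left without any mechanism. Sectional rank does not ``reassemble'': each extension step costs $+1$, so even for $K=C_S(h)$ your route via $\bar K\hookrightarrow SO(4)$ and subquotients of $Q_{2^a}\times Q_{2^b}$ only yields $d(H)\le d(H/\langle h\rangle)+1\le 5$ for $H\le K$ containing $h$, not $4$; the paper gets $4$ directly from the subnormal series $H_1\vartriangleleft H_2\vartriangleleft H_3\vartriangleleft S'$ of Proposition~\ref{proposition: 2} and Proposition~\ref{centralizer} (pointwise stabilizer of a fixed circle cyclic, circle-rotations abelian of rank at most $2$, then two index-$2$ steps for reflections and exchange elements), and this is Lemma~\ref{lemma: 2}. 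More importantly, the structural crux of the paper is absent from your proposal: MacWilliams' theorem that a $2$-group with no normal elementary abelian subgroup of rank $3$ has sectional rank at most $4$ \cite{macwilliams19702}, combined with Harada's lemma \cite[p.~514, (8.14)]{suzuki1986group}, reduces the entire theorem to one configuration --- a normal $A\cong\mathbb{Z}_2^3\vartriangleleft S$ with $A<C_S(A)$ --- which the geometric case analysis (Lemma~\ref{lemma MZ}, Lemma~\ref{lemma: 1}, incidence patterns of fixed circles, successive quotients and the Smith conjecture) then eliminates. This includes the genuinely hard Case~2, where $h\notin A$ and the section under scrutiny contains no hyperelliptic involution in controlled position; your fusion heuristic about ``rigid configurations of fixed circles'' does not engage with that situation, and ``invoke the $r=1$ results wherever a $\mathbb{Z}_2$-homology-sphere profile appears and reassemble'' is a hope, not an argument (the paper does use such reductions, e.g.\ Remark~\ref{Remark: 1} via \cite{reni2001finite} and the reductions to Case~1 inside Case~2, but only after the normal-$\mathbb{Z}_2^3$ framework makes them applicable).

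On the credit side, two of your preliminary steps are sound, and one is even sharper than the paper's corresponding statement: since $H_1(M;\mathbb{Z}_2)\cong\mathbb{Z}_2$ forces $\sum_i\dim_{\mathbb{Z}_2}H_i(M;\mathbb{Z}_2)=4$, the Smith inequality does show unconditionally that \emph{every} involution on $M$ fixes at most two circles, whereas Lemma~\ref{lemma: 1} obtains this only for involutions commuting with $h$ and under the hypothesis that $M/\langle h\rangle$ is a $\mathbb{Z}_2$-homology sphere; likewise the linearization $\bar E\hookrightarrow SO(4)$, whose $2$-rank is $3$, correctly bounds elementary abelian subgroups containing $h$ by rank $4$. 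But these observations bound ranks and centralizers only; without something playing the role of MacWilliams--Harada, the case analysis that actually proves the sectional bound of $4$ for all of $S$ never gets off the ground.
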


There is an example which tells us that our result is sharp. The projective space $\mathbb{R}P^3$ admits an action of the elementary abelian group of rank four containing a hyperelliptic involution. This action is obtained as follows:
\\
Consider the central product 
\begin{align*}
    Q_8\times_{\mathbb{Z}_2}Q_8
\end{align*}
of the quaternion group $Q_8$ with itself (i.e. the direct product of $Q_8$ with itself with identified centers). This group is a subgroup of the special orthogonal group $SO(4)\cong S^3\times_{\mathbb{Z}_2}S^3$ and acts on the 3-sphere, see \cite{mecchia-seppi}. This action on the 3-sphere contains a central free involution (the antipodal map).  The quotient of $S^3$ by the subgroup generated by the antipodal map is the projective space $\mathbb{R}P^3$.   The action induced  by  $ Q_8\times_{\mathbb{Z}_2}Q_8$ on $\mathbb{R}P^3$  is the action of an elementary abelian group of rank four. It contains a hyperelliptic involution whose fixed point set has two components.

The 2-rank is bounded from above by the sectional 2-rank, then this example shows that four is a sharp upper bound also for the 2-rank of $G.$

The case $r=2$ was the missing case, so we can state a general result where the number of components of the fixed-point set is not fixed.

\begin{theorem}\label{main_general}
Let $M$ be a smooth, closed and orientable 3-manifold, and let $G$ be a finite group acting smoothly and orientation-preservingly on $M$.  If $G$ contains a hyperelliptic involution, then the  2-rank of $G$ is at most 4 and the sectional 2-rank is at most six. Moreover, the upper bound for the 2-rank is sharp. 

\end{theorem}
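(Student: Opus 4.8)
The plan is to reduce the statement to a case analysis on the number $r$ of components of the fixed-point set of a hyperelliptic involution in $G$, invoking in each regime the result already available. First I would record the key preliminary observation: $r$ is an invariant of $M$, not of the chosen involution. Indeed, since the first $\mathbb{Z}_2$-homology group of $M$ has rank $r-1$ (see \cite[Sublemma 15.4]{sakuma1995homology}), every hyperelliptic involution acting on $M$ has fixed-point set with the same number of components. Thus, fixing one hyperelliptic involution $h\in G$ and letting $r$ be the number of components of its fixed-point set yields a well-defined partition into the cases $r=1$, $r=2$ and $r\geq 3$.

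Next I would dispatch each case by citation. For $r=1$ the manifold $M$ is a $\mathbb{Z}_2$-homology sphere, and the bounds (2-rank at most $3$, sectional 2-rank at most $4$) follow from \cite{Dotzel-Hamrick} or \cite{reni2000}. For $r=2$, Theorem~\ref{main} gives sectional 2-rank at most $4$, and hence 2-rank at most $4$. For $r\geq 3$, \cite[Proposition 4]{mecchia2020finite} bounds the 2-rank by $4$, while combining \cite[Proposition 3 and Lemma 5]{mecchia2020finite} with the $\mathbb{Z}_2$-homology sphere estimate bounds the sectional 2-rank by $6$. Taking the worst case over the three regimes, the 2-rank is at most $4$ and the sectional 2-rank is at most $6$ in every case, which is the assertion.

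For sharpness I would simply point to the example already exhibited above: the action of $Q_8\times_{\mathbb{Z}_2}Q_8$ on $\mathbb{R}P^3$ induced from the inclusion in $SO(4)$ (after quotienting $S^3$ by the antipodal map) realizes an elementary abelian $2$-group of rank $4$ containing a hyperelliptic involution, and such a group has 2-rank exactly $4$. This shows that the upper bound of $4$ on the 2-rank cannot be improved.

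Since all the genuine analytic work is carried out in Theorem~\ref{main} and in the cited papers, I do not expect any real obstacle here: the argument is essentially a synthesis of previously established results. The only point demanding a little care is the sectional 2-rank bound in the case $r\geq 3$, which is not a single citation but is assembled from \cite{mecchia2020finite} together with the homology-sphere bound; one must verify that this combination indeed produces the constant $6$ and not a larger value.
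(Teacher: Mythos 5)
Your proposal is correct and takes essentially the same route as the paper, which never writes a separate proof of Theorem~\ref{main_general} but obtains it exactly as you do: splitting on $r$ (well-defined since $H_1(M;\mathbb{Z}_2)$ has rank $r-1$), invoking the $\mathbb{Z}_2$-homology sphere bounds for $r=1$, Theorem~\ref{main} for $r=2$, and \cite[Propositions 3, 4 and Lemma 5]{mecchia2020finite} combined with the homology-sphere estimate for $r\geq 3$, with sharpness witnessed by the $Q_8\times_{\mathbb{Z}_2}Q_8$ action on $\mathbb{R}P^3$. No gaps.
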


\medskip

We also consider the class of simple groups that has been already studied for the other cases; in general, the analysis of 2-subgroup  is  a key step in the study of simple groups acting on 3-manifolds.

If $G$ is a finite simple group containing  a hyperelliptic involution with connected fixed-point set, then $G$ is isomorphic to the alternating group $\mathbb{A}_5$ \cite{zimmermann2015}. This result is a consequence of a more general one that states that a simple group acting on a $\mathbb{Z}_2$-homology 3-sphere is isomorphic to a linear fractional group $PSL(2,q)$ for some odd prime power  $q$ or to the alternating group $\mathbb{A}_7$ \cite{mecchia2006finite}. The proof of this result  is based on the Gorenstein-Walter classification of simple groups with dihedral Sylow 2-subgroups, a highly non-trivial result.  We remark that each $\mathbb{Z}_2$-homology sphere is  a rational homology 3-sphere, but for this broader class of 3-manifolds the situation changes completely: each finite group acts on a rational homology 3-sphere (see \cite{cooper-long} for free actions and \cite{boileau-et-al} for non-free actions). A complete classification of the finite groups acting on integer or $\mathbb{Z}_2$-homology 3-sphere is not known and appears to be difficult.

The case $r\geq 2$ is considered again in \cite{mecchia2020finite}, where it is proved that  if $G$ is simple, then $G$ is isomorphic to  $PSL(2,q)$ for some odd prime power  $q,$ or to one of  four other small simple groups  ($PSL(3, 5),$ the projective general
unitary group $PSU(3, 3),$ the Mathieu group $M_{11}$ and the alternating group  $\mathbb{A}_7$). The key point of the proof is the existence in $G$ of  a dihedral 2-subgroup weakly closed in the Sylow 2-subgroup;  classifications of simple groups with dihedral Sylow 2-subgroups (by Gorenstein Walter) and with quasi-dihedral Sylow 2-subgroups (by Alperin, Brauer, Gorenstein) are also used.
In the present case, we use the classification of simple groups of sectional 2-rank at most four by Gorenstein and Harada (see \cite[Theorem 8.12]{suzuki1986group}). We obtain the following result:

\begin{theorem}\label{simple}
Let $M$ be a smooth, closed and orientable 3-manifold, and let $G$ be a finite simple group acting smoothly and orientation-preservingly on $M$. If $G$ contains a hyperelliptic involution whose fixed-point set has two components, then $G$ is isomorphic to  the projective special linear group $PSL(2,q)$ for some odd prime power  $q.$
\end{theorem}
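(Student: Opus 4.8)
The plan is to feed Theorem~\ref{main} into the classification of finite simple groups of small sectional $2$-rank, and then to test each surviving group against the geometry of the hyperelliptic action. Since $G$ is simple and contains a hyperelliptic involution whose fixed-point set has two components, Theorem~\ref{main} gives that the sectional $2$-rank of $G$ is at most $4$. The Gorenstein--Harada classification (\cite[Theorem 8.12]{suzuki1986group}) then confines $G$ to the explicit finite list of simple groups of sectional $2$-rank at most four: the families $PSL(2,q)$, $PSL(3,q)$, $PSU(3,q)$ together with a few higher-rank linear and unitary groups, the low-rank Lie-type groups of characteristic two, the alternating groups $\mathbb{A}_7,\dots,\mathbb{A}_{11}$, the small Mathieu groups, and a handful of other sporadic groups. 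The goal is to show that the only survivors are the groups $PSL(2,q)$ with $q$ odd.

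The first reduction uses only that $G$ acts on a $3$-manifold and carries a hyperelliptic involution $h$, with no reference yet to the number of components. Since $M/\langle h\rangle\cong S^3$, the quotient $C_G(h)/\langle h\rangle$ acts on $S^3$ preserving the branch link, and the smooth, orientation-preserving action is locally linear, so point isotropy groups embed in $SO(3)$ and the pertinent $2$-subgroups embed in finite groups acting on $S^3$, hence (up to the geometrization-type results for spherical quotients) in subgroups of $SO(4)\cong S^3\times_{\mathbb{Z}_2}S^3$. Imposing these restrictions on the involution centralizers, together with the global analysis carried out in \cite{mecchia2006finite} for $r=1$ and \cite{mecchia2020finite} for $r\ge 3$, rules out every group on the Gorenstein--Harada list that cannot contain such a hyperelliptic involution: the higher-rank linear and unitary groups, most of the groups $PSL(3,q)$ and $PSU(3,q)$, the alternating groups $\mathbb{A}_8,\dots,\mathbb{A}_{11}$, the characteristic-two groups (including $PSL(2,8)$ and $PSL(2,16)$, which are not of the form $PSL(2,q)$ with $q$ odd), and all sporadic groups except $M_{11}$. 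What survives is $PSL(2,q)$ with $q$ odd together with the four exceptional groups $PSL(3,5)$, $PSU(3,3)$, $M_{11}$, and $\mathbb{A}_7$ that already appear in the general $r\ge 2$ situation.

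The decisive and genuinely new step is to discard these four groups using that $r=2$ is equivalent to $\dim_{\mathbb{Z}_2}H_1(M;\mathbb{Z}_2)=1$. For each of $PSL(3,5)$, $PSU(3,3)$, $M_{11}$, and $\mathbb{A}_7$ I would identify the conjugacy class of a candidate hyperelliptic involution $h$, pass to a maximal elementary abelian $2$-subgroup $V\le G$ containing $h$, and study the induced $V$-action on $M$ by Smith theory: each involution of $V$ has fixed-point set a disjoint union of circles, and the interplay of these fixed sets with the $\mathbb{Z}_2$-homology of $M$ constrains $\dim_{\mathbb{Z}_2}H_1(M;\mathbb{Z}_2)$ in terms of the $2$-local structure of $G$, in each case excluding the value $1$. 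This is precisely where the main difficulty lies: the four exceptional groups are not ruled out by any soft argument, since each of them does admit hyperelliptic involutions for other values of $r$ --- for instance $\mathbb{A}_7$ occurs already on $\mathbb{Z}_2$-homology spheres, where $r=1$ --- so their exclusion requires a quantitative count of the number of branch components, equivalently of the rank of $H_1(M;\mathbb{Z}_2)$, attached to each relevant involution class. Once this count forbids the value $2$ uniformly across the four groups, only $PSL(2,q)$ with $q$ odd remains, completing the proof.
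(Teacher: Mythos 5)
Your opening move coincides with the paper's (Theorem~\ref{main} plus the Gorenstein--Harada classification \cite[Theorem 8.12]{suzuki1986group}), but from there the proposal has two genuine gaps. First, your ``first reduction'' does not work as stated. You cannot import the analyses of \cite{mecchia2006finite} (case $r=1$) and \cite{mecchia2020finite} (case $r\geq 3$) to rule out groups here: the hypothesis fixes $r=2$, so $H_1(M;\mathbb{Z}_2)$ has rank exactly $r-1=1$ and $M$ is a manifold to which neither of those analyses applies --- those papers constrain actions on different manifolds, not candidate subgroups of the present $G$. Likewise, the observation that $C_G(h)/\langle h\rangle$ acts on $M/\langle h\rangle\cong S^3$ and hence is conjugate into $SO(4)$ is too weak to eliminate anything interesting, since finite subgroups of $SO(4)$ can be non-solvable (they may contain $\mathbb{A}_5$ factors). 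The tool the paper actually uses is Proposition~\ref{centralizer}, which exploits specifically the two-component fixed-point set of $h$: the centralizer $C_G(h)$ must be solvable and must have normal Sylow $p$-subgroups for every odd prime $p$. Comparing this with the known involution centralizers (\cite[Table 4.5.1]{GLS3}, \cite{Atlas}) eliminates every group on the Gorenstein--Harada list having at least two conjugacy classes of involutions, since in each such group every involution class fails the test.

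Second, your ``decisive'' step is a plan rather than a proof, and it aims at the wrong mechanism. After the reduction to a single class of involutions, every involution of $G$ is hyperelliptic with a two-component fixed-point set, and the paper then proves that the $2$-rank of $G$ is at most \emph{two}: in an elementary abelian subgroup $H\cong\mathbb{Z}_2^3$ containing $h$, every involution $f\neq h$ would have to act as a reflection on both components of $Fix(h)$ (otherwise the projection of $Fix(f)$ to $M/\langle h\rangle$ would be disconnected), which is incompatible with $H$ having rank three. This unlocks the much sharper classification of simple groups of $2$-rank at most two \cite[Theorem 6.8.16]{suzuki1986group}, reducing to $PSL(2,q)$, $PSL(3,q)$, $PSU(3,q)$, $PSU(3,4)$, $\mathbb{A}_7$, $M_{11}$. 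Of these, $PSL(3,q)$ and $PSU(3,q)$ (hence $PSL(3,5)$ and $PSU(3,3)$), $PSU(3,4)$, and $M_{11}$ (whose involution centralizer is $GL(2,3)$, with non-normal Sylow $3$-subgroups) are again killed by Proposition~\ref{centralizer} --- a purely algebraic step with no count of $\dim_{\mathbb{Z}_2}H_1(M;\mathbb{Z}_2)$ anywhere --- while $\mathbb{A}_7$ requires a bespoke geometric argument: the Klein group $\langle(1,2)(3,4),(1,3)(2,4)\rangle$ commutes with $(5,6,7)$, the order-$3$ element must rotate each component of each fixed-point set, and this forces precisely the triple-intersection configuration shown to be impossible in the proof of Lemma~\ref{lemma: 1}. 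Your proposed Smith-theoretic computation for the four exceptional groups is never carried out, and there is no evidence it could be: for $M_{11}$, $PSL(3,5)$ and $PSU(3,3)$ the exclusion is local-algebraic, not homological. As written, the proposal establishes the Gorenstein--Harada bound and nothing beyond it.
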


Considering the previous cases, we obtain the following result in the general case.

\begin{theorem}\label{simple_general}
Let $M$ be a smooth, closed and orientable 3-manifold, and let $G$ be a finite simple group acting smoothly and orientation-preservingly on $M$.  If $G$ contains a hyperelliptic involution, then $G$ is isomorphic to  the projective special linear group $PSL(2,q)$ for some odd prime power  $q,$ or to one of the following four groups: the projective special linear group $PSL(3, 5),$ the projective general
unitary group $PSU(3, 3),$ the Mathieu group $M_{11}$ and the alternating group  $\mathbb{A}_7.$
\end{theorem}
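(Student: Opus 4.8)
The plan is to derive Theorem~\ref{simple_general} by assembling the previously established cases according to the number $r$ of components of the fixed-point set of the hyperelliptic involution $h$ contained in $G$. Since $G$ contains a hyperelliptic involution, exactly one of the cases $r=1$, $r=2$, or $r\geq 3$ occurs, and each has already been treated: the first in \cite{mecchia2006finite,zimmermann2015}, the last in \cite{mecchia2020finite}, and the middle case $r=2$ in our Theorem~\ref{simple} above. The task is therefore purely one of bookkeeping, taking the union of the conclusions of these three results.

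\smallskip

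First I would recall the case $r=1$. Here $M$ is a $\mathbb{Z}_2$-homology sphere, and by \cite[main result]{mecchia2006finite} a finite simple group acting on a $\mathbb{Z}_2$-homology 3-sphere is isomorphic to $PSL(2,q)$ for some odd prime power $q$ or to the alternating group $\mathbb{A}_7$. Both of these already appear in the list of the theorem (the groups $\mathbb{A}_7$ and the $PSL(2,q)$ family), so this case contributes nothing new.

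\smallskip

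Next I would recall the case $r\geq 3$. By \cite{mecchia2020finite} a finite simple group $G$ acting on such a 3-manifold and containing a hyperelliptic involution is isomorphic to $PSL(2,q)$ for some odd prime power $q$, or to one of $PSL(3,5)$, $PSU(3,3)$, $M_{11}$, or $\mathbb{A}_7$. This is precisely the full list asserted in the statement. Finally, for the remaining case $r=2$, Theorem~\ref{simple} gives the stronger conclusion that $G$ is isomorphic to $PSL(2,q)$ for some odd prime power $q$, which is again contained in the list.

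\smallskip

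Collecting the three cases, every possibility for $G$ lies in the set consisting of the family $PSL(2,q)$ together with the four sporadic-looking exceptions $PSL(3,5)$, $PSU(3,3)$, $M_{11}$, and $\mathbb{A}_7$, which is exactly the conclusion of Theorem~\ref{simple_general}. I do not expect any genuine obstacle in this argument, since all the substantive work is carried out in the cited references and in Theorem~\ref{simple}; the only point requiring a little care is to confirm that the lists from the $r=1$ and $r=2$ cases are genuinely subsets of the $r\geq 3$ list, so that taking the union does not enlarge the family beyond the five stated types, and this is immediate by inspection.
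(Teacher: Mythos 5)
Your proposal is correct and follows essentially the same route as the paper, which gives no separate argument for Theorem~\ref{simple_general} beyond the phrase ``Considering the previous cases'': the implicit proof is exactly your trichotomy on the number $r$ of components (well-defined since the fixed-point set of a hyperelliptic involution is nonempty), citing \cite{mecchia2006finite,zimmermann2015} for $r=1$, Theorem~\ref{simple} for $r=2$, and \cite{mecchia2020finite} for $r\geq 3$. Your check that the $r=1$ and $r=2$ lists are subsumed by the $r\geq 3$ list is the only verification needed, and it is carried out correctly.
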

Considering that each finite group admits an action on a 3-manifold, even on a rational homomolgy 3-sphere, the list of simple groups we obtain is quite restricted. However, to the present day $\mathbb{A}_5$ is the only known simple group admitting an action on a 3-manifold and containing a hyperelliptic involution. 

It is worth mentioning that   examples of groups containing  a hyperelliptic involution are difficult to construct. The straightforward method consists of taking a finite group of  symmetries of a  link and lifting it to the 2-fold branched cover of the link. In this way we obtain groups with a central hyperelliptic involution. By the geometrization of 3-manifolds,   a finite group acting on $S^3$ is conjugate to a finite subgroup of $\SO4$, so we have a very restricted list of possibilities (see \cite{conway-smith, mecchia-seppi}).

Other examples can be obtained by using Seifert spherical 3-manifolds. 
In particular, the  homology Poincar\'e sphere $S^3/I^*_{120},$ the octahedral manifold  $S^3/I^*_{48}$ and  the quaternionic manifold $S^3/Q_8$ admit an action of $\mathbb{A}_5$  that includes a hyperelliptic involution; see \cite{mccullough} for the notations and the computation of the isometry groups, and \cite{mecchia-seppi} for a method to compute the space of orbits of the involutions.  The numbers of components of the fixed-point set of the hyperelliptic involutions in the three examples are 1, 2 and 3, respectively.

The results in the present paper raise naturally the following two questions:
\begin{enumerate}

\item In Theorem \ref{main_general},   is the upper bound of six  for the sectional 2-rank sharp?

\item Which simple  groups in  the list of Theorem \ref{simple_general} admit an action on a 3-manifolds such that a hyperelliptic involution is contained in the group?

\end{enumerate}

%-----------------------INTRO---------------------------

\subsection*{Organization of the paper} 

In Section~\ref{preliminaries}, we introduce the necessary preliminaries; Section~\ref{main proof} is devoted to the proof of  Theorem~\ref{main}; in Section~\ref{simple groups} we consider the finite simple group case.

%\subsection*{Acknowledgements} {{We would like to thank an anonymous referee for several advices which improved the exposition of the paper.}}

%------------------------------SECTION PRELIMINARIES

\section{Preliminaries results} \label{preliminaries}

Let $M$ be a smooth, closed and orientable 3-manifold, and let $G$ be a finite group which acts on $M$. The action of $G$ on $M$ is smooth and orientation preserving. The fixed-point set of $g\in G$, which will be denoted by $Fix(g)$, either is empty or is a disjoint union of finitely many simple closed curves.

\begin{proposition}\label{proposition: 2} 

Let $K$ be a simple closed curve in $M$ and $D$ be subgroup of $G$ fixing $K$ set-wise. Then $D$ contains $H$ an abelian subgroup of rank at most two and index at most two. If the index of $H$ in $D$ is  two, $D$ is a semidirect product of $H$ and a subgroup of order two acting dihedrally on $H$ (i.e. a \textit{generalized dihedral group} of rank at most 2).

%either $D$ is abelian of rank at most two or is a generalized dihedral group corresponding to an abelian subgroup $H$ of rank at most two, i.e. is a semidirect product of $H$, a normal Abelian subgroup of   rank at most two, and a subgroup of order two acting dihedrally on $H$. 

\end{proposition}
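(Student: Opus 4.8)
The plan is to analyze the action of $D$ on a neighborhood of the fixed curve $K$ and translate the set-wise stabilizer into a linear representation whose structure can be read off directly.

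\medskip

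\textbf{Setting up the local model.} First I would observe that since $D$ fixes $K$ set-wise, $D$ acts on the simple closed curve $K\cong S^1$. The action of $G$ is orientation-preserving on $M$, and I would consider the induced action on $K$. The image of $D$ in $\mathrm{Homeo}(S^1)$ is a finite group, hence either cyclic (acting by rotations) or dihedral (if some elements reverse the orientation of $K$). Let $D_0$ be the kernel of this action, i.e. the subgroup of $D$ fixing $K$ \emph{pointwise}. Then $D/D_0$ is cyclic or dihedral. The key structural input comes from understanding $D_0$: I would pick a point $p\in K$ fixed by $D_0$ and linearize the action of $D_0$ on the tangent space $T_pM$, which is 3-dimensional. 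Since $D_0$ fixes $p$ and fixes $K$ pointwise, it fixes the tangent line $T_pK$ and acts on the normal 2-plane $N_p$. Because the action is orientation-preserving on $M$ and the element already preserves the direction along $K$, each element of $D_0$ acts on $N_p$ as an element of $SO(2)$. Hence $D_0$ embeds into $SO(2)$ and is therefore \emph{cyclic}.

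\medskip

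\textbf{Assembling the extension.} Now $D$ sits in an extension $1\to D_0\to D\to D/D_0\to 1$ with $D_0$ cyclic and $D/D_0$ cyclic or dihedral. I would split into the two cases. If $D/D_0$ is cyclic, I claim $D$ has an abelian subgroup of index at most two: the preimage of the rotation-by-the-full-group case can be handled by noting that an element $g$ projecting to a generator of the cyclic quotient, together with $D_0$, generates an abelian group provided $g$ centralizes $D_0$. To see the centralizing, I would again linearize at a point: an element rotating $K$ permutes the normal planes compatibly, and the commutator $[g,D_0]$ lies in $D_0$ and acts trivially on the orientation data, forcing it to be small. More cleanly, since $D_0\le SO(2)$ is cyclic and the whole group of symmetries of the framed normal bundle along $K$ is abelian-by-(cyclic or dihedral), the subgroup $H$ acting by rotations of both $K$ and the normal planes is abelian of rank at most two (one generator for rotation along $K$, one for the normal rotation). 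If $D/D_0$ is dihedral, the index-two cyclic part pulls back to this abelian $H$, and the remaining involution acts by reversing orientation on $K$, which on $H$ inverts the $K$-rotation factor; I would check this gives precisely the generalized dihedral structure.

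\medskip

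\textbf{The rank bound and the dihedral action.} The rank-two bound on $H$ comes from the fact that $H$ is generated by a rotation along $K$ (from the $D/D_0$ direction) and a normal rotation (from $D_0\le SO(2)$), so $H$ is a quotient of $\Z\times\Z$, hence abelian of rank at most two. For the final claim, when $[D:H]=2$, the non-trivial coset is represented by an orientation-reversing symmetry $\tau$ of $K$; since $M$ is oriented and $\tau$ reverses $K$, it must reverse the normal orientation too, so conjugation by $\tau$ inverts both generators of $H$. This is exactly the dihedral action, making $D$ a generalized dihedral group of rank at most two. The main obstacle I anticipate is the centralizing step: verifying rigorously that the rotation along $K$ commutes with the normal rotation requires care, because a priori an element translating $p$ to another point of $K$ could twist the normal framing. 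I would resolve this by working with the $S^1$-equivariant normal bundle of $K$ and using that the structure group reduces to $SO(2)$, so the twisting is itself a rotation and commutes with $D_0$. This framing/holonomy bookkeeping is where the real work lies; once it is in place, the group-theoretic conclusion follows formally.
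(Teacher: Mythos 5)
Your overall route is essentially the paper's: both arguments linearize the action near $K$ (the paper averages a Riemannian metric, takes a $D$-invariant tubular neighbourhood isometric to a solid torus, and invokes Newman's theorem for faithfulness; your linearization at a point of $K$ and your $S^1$-equivariant normal bundle are the same mechanism), both split $D$ into the subgroup $H$ of $K$-rotations and the coset of $K$-reflections, and both get the rank bound from the cyclic pointwise stabilizer $D_0$ together with the cyclic quotient $H/D_0$. Two of your loose ends are real but fixable along exactly the paper's lines: the assertion that $D_0$ embeds in $SO(2)$ needs faithfulness of the normal representation (Bochner linearization plus a Newman-type argument, which the paper gets from the isometric action), and the ``framing/holonomy bookkeeping'' you flag for abelianness of $H$ is precisely what the invariant-metric solid torus buys you --- orientation-preserving isometries rotating the core restrict to translations of the boundary torus, and translations commute.

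The genuine gap is in the final claim. Knowing that conjugation by $\tau\in D\setminus H$ inverts $H$ does \emph{not} make $D$ a semidirect product of $H$ with $\Z_2$: generalized dicyclic groups satisfy the same conjugation condition without splitting. In $Q_8$, the element $j$ inverts $H=\langle i\rangle\cong\Z_4$, yet $j^2=i^2\neq 1$ and $Q_8$ has no complement to $H$, so it is not generalized dihedral. Your argument as written is therefore compatible with $D$ being dicyclic, and the word ``involution'' for the coset representative is assumed, not proved. To close this you must show every element of $D\setminus H$ has order two, and this is where the geometry re-enters: a $K$-reflection $\tau$ restricts to $K$ as a genuine reflection of the circle, hence fixes two points of $K$; linearizing at such a fixed point $p$ (in an invariant metric, so $\tau$ is determined by $d\tau_p$), the differential acts as $-1$ on $T_pK$ and, by orientation-preservation of the action on $M$, as a determinant $-1$ reflection on the normal plane, whence $d\tau_p^2=\mathrm{id}$ and $\tau^2=1$. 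This is exactly the step the paper records when it observes that the elements of $D$ not in $H$ have order two and fix two points of the core; without it, your conclusion ``this is exactly the dihedral action'' does not follow.
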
 

\begin{proof} 

We may assume that $D$ acts by isometries in some riemannian metric on M. Moreover $K$ has a $D-$invariant tubular neighbourhood of $K,$ and we can restrict this bundle to a unit disk bundle $N$ isometric to the solid torus $S^1\times D^2$. We get that $D$ acts by isometries on the solid torus and,  by Newmann's theorem \cite[p. 157, Theorem 9.5]{bredon1972introduction}, the action is faithful.

Let $H$ be the subgroup of $D$ acting as rotations on the $0-$section of N (the core of the solid torus corresponding to $K$) and $H'$ be the subgroup acting trivially on the $0-$section. We may recall that we are supposing that the group action is orientation preserving and thus it follows that $H$ is abelian. Since $H'$ is cyclic and the quotient group $H/H'$ is cyclic, $H$ is abelian of rank at most two. If $H=D$ the proof is complete, otherwise $H$ has index two in $D$ and the elements of $D$ not contained in $H$ have order two.  They act as reflections on the core and fix two of its points. Moreover, the action of these  involutions on $H$ is dihedral. 

\end{proof}

It is desirable to make a few additional comments on this result and its proof: When we talk of the elements of $H$ we say they act as rotations on $K$ or they are also called $K-$rotations. While for the elements of $D$ not in $H$ we say that they act as reflections on $K$ or we say $K-$reflections.  Thus, all elements of a group fixing $K$ set-wise act on it either by rotation or reflection. The reflections fix pointwise two elements of $K$ and have thus nonempty fixed-point set. Finally, since $H'$ is cyclic there cannot be two distinct involutions fixing $K$ pointwise. 

\medskip

Involutions acting on $\mathbb{Z}_2-$homology  play a key role in the proof of Theorem \ref{main}. We recall  that the cyclic branched cover of $S^3$ along a knot is a $\mathbb{Z}_2-$homology sphere, see \cite[Sublemma 15.4]{sakuma1995homology}.

\begin{remark}

By classical Smith theory \cite[p.148, Theorem 8.1]{bredon1972introduction} the fixed-point set of an involution acting on a $\mathbb{Z}_2-$homology sphere is connected (might be empty). 

\end{remark}

\begin{lemma}\label{lemma MZ}
 If $G\cong \mathbb{Z}_2\times \mathbb{Z}_2$ and $M$ is a $\mathbb{Z}_2-$homology sphere, then either (i) or (ii) holds:

\begin{enumerate}[label=(\roman*)]
\item There is one free involution in $G$ and two involutions with nonempty connected fixed-point sets. The union of the  two fixed-point sets gives two disjoint circles.
\item All three involutions in $G$ have nonempty connected fixed-point set. The union of the three fixed-point sets gives a theta curve  (three circles  intersecting in exactly two points).
\end{enumerate}

\end{lemma}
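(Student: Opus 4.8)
The plan is to analyze the three involutions of $G=\{1,a,b,c\}$ with $c=ab$ through their fixed-point sets and, crucially, through the common fixed-point set $\mathrm{Fix}(G)=\mathrm{Fix}(a)\cap\mathrm{Fix}(b)$. First I would record the two basic inputs. By the Smith-theoretic remark above, each of $\mathrm{Fix}(a)$, $\mathrm{Fix}(b)$, $\mathrm{Fix}(c)$ is either empty or a single simple closed curve (connected, hence a circle). Applying Smith theory to the $2$-group $G$ itself, $\mathrm{Fix}(G)$ is again a $\mathbb{Z}_2$-homology sphere; since a point lies in $\mathrm{Fix}(G)$ exactly when it is fixed by two (equivalently all three) of the involutions, the remark following \cref{proposition: 2} — that no two distinct involutions can fix a curve pointwise — forces $\mathrm{Fix}(G)$ to be $0$-dimensional. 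A $0$-dimensional $\mathbb{Z}_2$-homology sphere consists of exactly two points, so $\mathrm{Fix}(G)$ is either empty or a pair of points $\{p,q\}$. The whole dichotomy of the lemma will be governed by which of these alternatives occurs.

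If $\mathrm{Fix}(G)=\{p,q\}\neq\varnothing$, then $p$ and $q$ are fixed by each of $a,b,c$, so all three involutions are non-free and carry circles $C_a,C_b,C_c$ each passing through $p$ and $q$. These circles are pairwise distinct (again by uniqueness of a pointwise-fixing involution), and for any two of them $C_x\cap C_y=\mathrm{Fix}(x)\cap\mathrm{Fix}(y)=\mathrm{Fix}(G)=\{p,q\}$. Thus one obtains three distinct circles meeting pairwise in exactly the same two points, which is precisely the theta curve of alternative (ii). (If desired, the transversality of the three arcs at $p$ and $q$ follows from putting a $G$-invariant metric near the fixed point and diagonalizing the isotropy representation $G\hookrightarrow SO(3)$ into three mutually orthogonal axes, but this is not needed for the statement.)

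If $\mathrm{Fix}(G)=\varnothing$, no point is fixed by two of the involutions, so whichever of $C_a,C_b,C_c$ are nonempty are pairwise disjoint; it remains to show that exactly two of them are nonempty. This is the one genuinely homological point, and I would extract it from the Borel dimension formula for an action of $(\mathbb{Z}_2)^2$ on a $\mathbb{Z}_2$-homology $n$-sphere: writing $n(K)$ for the dimension of the fixed set of a subgroup $K$ (with the convention $\dim\varnothing=-1$),
\[
n - n(G) \;=\; \sum_{[G:K]=2}\bigl(n(K)-n(G)\bigr),
\]
where $K$ runs over the three index-two subgroups $\langle a\rangle,\langle b\rangle,\langle c\rangle$. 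Here $n=3$ and $n(G)=-1$, while each $n(K)\in\{-1,1\}$ since every nonempty involution fixed set is a circle. The formula then reads $4=\sum\bigl(n(K)+1\bigr)$ with each summand in $\{0,2\}$, so exactly two of the terms equal $2$: exactly two of the involutions have circular fixed-point set and the third is free. These two circles are disjoint because their intersection is $\mathrm{Fix}(G)=\varnothing$, giving alternative (i). I expect this step to be the main obstacle: the soft arguments (Smith connectivity, the local orthogonal normal form, and the impossibility of a free $(\mathbb{Z}_2)^2$-action coming from periodic cohomology) do not by themselves exclude the configuration of a single non-free involution, and it is exactly the Borel formula that rules this out while simultaneously excluding the all-free and three-disjoint-circle configurations.
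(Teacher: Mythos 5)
Your proof is correct: Smith theory gives that each involution's fixed-point set is empty or a single circle and that $\mathrm{Fix}(G)$ is a $\mathbb{Z}_2$-homology sphere of dimension $0$ or empty (the $1$-dimensional case being excluded by the uniqueness of an involution fixing a curve pointwise), and Borel's formula $n-n(G)=\sum_K\bigl(n(K)-n(G)\bigr)$ over the three order-two subgroups then forces exactly the two configurations (i) and (ii). The paper itself gives no argument, deferring to \cite[Lemma 2]{mecchia2006finite}, and the proof there is essentially this same Smith-theory-plus-Borel-formula computation, so your attempt reconstructs the intended proof rather than taking a different route.
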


The proof of the Lemma \ref{lemma MZ} can be found in  \cite[Lemma 2]{mecchia2006finite}

\begin{lemma}\label{lemma: 1} 

Let $h$ be an involution of $G$ which has a fixed-point set with two components and suppose that $M/\langle h\rangle$ is a $\mathbb{Z}_2$-homology 3-sphere. If $f$ is an involution of $G$ commuting with $h$ then the fixed-point set of $f$ has at most two components. 

\end{lemma}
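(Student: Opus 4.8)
The plan is to push $f$ down to the quotient $\bar M := M/\langle h\rangle$, which by hypothesis is a $\mathbb{Z}_2$-homology sphere, and to recover the components of $\mathrm{Fix}(f)$ from the fixed circle of the induced involution through the branched covering $\pi\colon M\to\bar M$. If $f=h$ there is nothing to prove, so I assume $f\neq h$; since $f$ and $h$ commute and are involutions, $\langle f,h\rangle\cong\mathbb{Z}_2\times\mathbb{Z}_2$, with third involution $fh$. Because $f$ normalizes $\langle h\rangle$ (in fact $fhf^{-1}=h$), it descends to an involution $\bar f$ of $\bar M$, and by Smith theory (the Remark above) its fixed-point set $C:=\mathrm{Fix}(\bar f)$ is either empty or a single circle. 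Since the fibres of $\pi$ are the orbits $\{x,hx\}$, a point $x$ lies over $C$ exactly when $fx\in\{x,hx\}$, i.e. $\pi^{-1}(C)=\mathrm{Fix}(f)\cup\mathrm{Fix}(fh)$; in particular $\mathrm{Fix}(f)\subseteq\pi^{-1}(C)$, so if $C=\varnothing$ then $f$ is free and we are done.

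The branch link is $L:=\pi(\mathrm{Fix}(h))$, which has two components $L_1,L_2$ because $\mathrm{Fix}(h)$ does, and the branch points of $\pi$ lying over $C$ are precisely $C\cap L=\pi(\mathrm{Fix}(f)\cap\mathrm{Fix}(h))$. The next step is a local model at a common fixed point $p$ of $\langle f,h\rangle$: the isotropy representation embeds $\mathbb{Z}_2\times\mathbb{Z}_2$ faithfully in $SO(3)$ as the three $\pi$-rotations about mutually perpendicular axes, so near $p$ the sets $\mathrm{Fix}(f)$ and $\mathrm{Fix}(h)$ are smooth arcs crossing transversally and $\mathrm{Fix}(\langle f,h\rangle)$ is finite. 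Computing the quotient of this model by $h$ shows that $C$ is a smooth circle meeting $L$ transversally, that the two arcs of $C$ abutting a branch point are of opposite type (over one both sheets lie in $\mathrm{Fix}(f)$, over the other both lie in $\mathrm{Fix}(fh)$), and that at each branch point the two $\mathrm{Fix}(f)$-sheets fold together. Hence, writing $k:=|C\cap L|$, the types alternate around $C$, so $k$ is even; each component of $\mathrm{Fix}(f)$ then covers exactly one $f$-type arc and passes through exactly two branch points, while each branch point lies on exactly one such component. Consequently $\mathrm{Fix}(f)$ has exactly $k/2$ components when $k\ge 2$ (and at most two when $k=0$, by analyzing the unbranched double cover $\pi^{-1}(C)\to C$).

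It remains to bound $k$, which I expect to be the crux. Since $f$ commutes with $h$ it preserves $\mathrm{Fix}(h)$, hence $\bar f$ preserves $L$ and permutes $\{L_1,L_2\}$. If $\bar f$ swaps them then $C\cap L=\varnothing$ and $k=0$; otherwise $\bar f$ restricts to a self-homeomorphism of each circle $L_i$ of order dividing two. The identity case is impossible, for it would force a whole component of $\mathrm{Fix}(h)$ to be fixed pointwise by $f$ as well; in the local $SO(3)$-model this makes the axes of $f$ and $h$ coincide, so $df=dh$ along that circle and therefore $f=h$, contrary to assumption. Thus each nontrivial $\bar f|_{L_i}$ is an involution of a circle — a free rotation or a reflection — giving $|C\cap L_i|\le 2$, and so $k=|C\cap L_1|+|C\cap L_2|\le 4$.

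Combining the count with this bound, $\mathrm{Fix}(f)$ has at most $k/2\le 2$ components, which is the assertion. The delicate points I anticipate are verifying the local quotient picture (the transversality of $C$ with $L$, the alternation of types, and the folding of sheets) carefully enough to justify the exact count $k/2$, and the clean separation of cases for $\bar f$ acting on the two-component link $L$ that yields $k\le 4$; the connectivity of $C$ supplied by Smith theory and the two-component hypothesis on $\mathrm{Fix}(h)$ are exactly what make the two halves of the argument fit together.
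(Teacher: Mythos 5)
Your proof is correct, and although it rests on the same two pillars as the paper's argument --- projecting to $\bar M = M/\langle h\rangle$ and invoking Smith theory to make $C=\mathrm{Fix}(\bar f)$ empty or a single circle --- the route from there is genuinely different. The paper proceeds by enumerating configurations: it splits according to whether $\mathrm{Fix}(f)\cap \mathrm{Fix}(h)$ is empty, meets in two points, or meets in four points, and in each configuration kills extra components of $\mathrm{Fix}(f)$ by hand with ``otherwise $\overline{K}$ would be separated'' arguments (including the two pictured sub-cases, one of which is excluded by such a separation). You instead extract a uniform count from the branched-covering structure: the local model $\mathbb{Z}_2\times\mathbb{Z}_2\hookrightarrow SO(3)$ at a common fixed point shows that the arcs of $C$ alternate between $f$-type and $fh$-type at the branch points, so $k=|C\cap L|$ is even and $\mathrm{Fix}(f)$ has exactly $k/2$ components when $k\geq 2$; the bound $k\leq 4$ then follows from the elementary fact that a nontrivial involution of a circle fixes at most two points, applied to $\bar f|_{L_i}$ --- which is precisely the downstairs avatar of the paper's observation that $f$ must act as a reflection on any component of $\mathrm{Fix}(h)$ it meets. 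Your approach buys a sharper, formulaic conclusion (component count $=k/2$ with $k\in\{0,2,4\}$, the parity argument replacing the paper's ad hoc exclusion of its ``first possibility'') at the cost of local-model bookkeeping; the paper avoids the local analysis but pays with case-by-case pictures. The two details you flagged do go through and should be written out: (i) for $k\geq 2$, a component of $\mathrm{Fix}(f)$ disjoint from $\mathrm{Fix}(h)$ has image open and closed in $C$, hence equal to $C$, contradicting that each branch point on $C$ has its unique preimage in $\mathrm{Fix}(h)$, so every component is the lift of a single closed $f$-arc (a circle through exactly two branch preimages); (ii) your rigidity argument excluding $\bar f|_{L_i}=\mathrm{id}$ (same differential along the circle forces $f=h$, after averaging to an invariant metric) is an equivalent substitute for the paper's preliminary remark that the pointwise stabilizer of a circle is cyclic and hence contains at most one involution.
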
 

\begin{proof} 

%For simplicity we suppose that the involution $h$ is hyperelliptic.\\ 

Let $\pi:M\longrightarrow M/\langle h\rangle$ be the projection. Let $Fix(h)=K_1\cup K_2$  and  $Fix(f)=\cup_{i\in\mathcal{I}}\Tilde{K}_i$ be the fixed-point set of $h$ and $f$ respectively, where $\{K_1,K_2\}$ and $\{\Tilde{K}_i\}_{i\in\mathcal{I}}$ are the components.  Since $f$ and $h$ commute, $f$ fixes $Fix(h)$ set-wise, while $h$ fixes set-wise both $Fix(f)$ and $Fix(fh).$ We denote by $\overline{f}$ the projection of $f$ to $M/\langle h\rangle.$ Since $\overline{f}$ acts on a $\mathbb{Z}_2$-homology sphere,  we have that $Fix(\overline{f})$ is connected (might be empty). Clearly if $Fix(\overline{f})=\pi(Fix(f)\cup Fix(fh))$ is empty, so is $Fix(f).$ For what follows we may suppose that $Fix(\overline{f})$ is a simple closed curve and let it denote by $\overline{K}.$

\medskip
 
Suppose first that the fixed-point sets of $h$ and $f$ have empty intersection. As said $h$ fixes set-wise $Fix(f)$ and being a diffeomorphism, we have for fixed $i$ in $\mathcal{I}$ that either $h(\Tilde{K_i})=\Tilde{K}_i$ or $h(\Tilde{K_i})=\Tilde{K}_j$ for some $j\in\mathcal{I}$ different from $i.$  Moreover since $Fix(f)\cap Fix(h)=\emptyset$ we have that $Fix(f)\cap Fix(fh)=\emptyset.$  This implies that either $Fix(f)$ or $Fix(fh)$ is empty, since otherwise $\pi(Fix(f))$ and $\pi(Fix(fh))$ give a separation of the connected set $\overline{K}.$  We can suppose that $Fix(fh)$ is empty and each  $\pi(\Tilde{K}_i)$ is a simple closed curve. 

If  $h(\Tilde{K}_i)=\Tilde{K}_i$ for all $i\in \mathcal{I}$, then  $\pi(K_1)$ and $\pi(Fix(f)\setminus K_1),$  give a separation of the connected set $\overline{K}$ unless $\pi(Fix(f)\setminus K_1)$ is empty, thus necessarily  $Fix(f)$ is connected. 

If $h(\Tilde{K}_i)=\Tilde{K}_j$ for some $i,j\in\mathcal{I},$ i different from j, we consider $\pi(\Tilde{K_i}\cup\Tilde{K}_j)$ and $\pi(Fix(f)\setminus (\Tilde{K_i}\cup\Tilde{K}_j)),$ again this two give a separation of $\overline{K}$ unless $\pi(Fix(f)\setminus (\Tilde{K_i}\cup\Tilde{K}_j))$ is empty and thus $Fix(f)$ has two components. 

% While if we take $P\in M$ a representative of a point in $Fix(\overline{f})=Fix(\overline{fh})$ then by the previous observation either $f(P)=P$ or $fh(P)=P,$ but not both at the same time. 

\medskip

Suppose secondly that $Fix(f)$ and $Fix(h)$ have nonempty intersection. In this case we have that the fixed-point set of $fh$ is nonempty.  We may observe that if $K_1$ meets $Fix(f)$ then they intersect in exactly two points. Indeed if they meet then $f$ fixes $K_1$ set-wise and must act as a reflection on $K_1.$ The same argument may be applied to $K_2.$  Therefore, we have that $Fix(f)$ can meet $Fix(h)$ either in two or four points, depending on whether it intersects one component of $Fix(h)$ or both. $Fix(fh)$  meets both $Fix(f)$ and $Fix(h)$ in two or four points.

Let us suppose first that $Fix(f)$ and $Fix(h)$ meet in two points. We have necessarily that a component of $Fix(f)$ and a component of $Fix(h)$ meet in two points and from this it follows that $Fix(f)$ has one component otherwise we obtain a separation of  $\overline{K}$.

Therefore, we may suppose that $Fix(f)$ and $Fix(h)$ meet in exactly four points, consequently also $Fix(fh)$ meets both $Fix(f)$ and $Fix(h)$ in four  points. Let $Fix(fh)=\cup_{j\in\mathcal{J}}K_j'$. 

We consider first the case where $K_1$ meets a component of $Fix(f)$ in two points.  We have that $K_2$ must meet another component  in two points. Suppose for simplicity that  $K_1$ (resp. $K_2$) intersects $\Tilde{K}_1$ (resp. $\Tilde{K}_2$). 
Thus, we have two possibilities.
%%%%%%%%%%%%%%%%%%%%%%%%%%%%%%% 

\begin{figure}[h] 

    \centering 

    \includegraphics[width=0.6\textwidth]{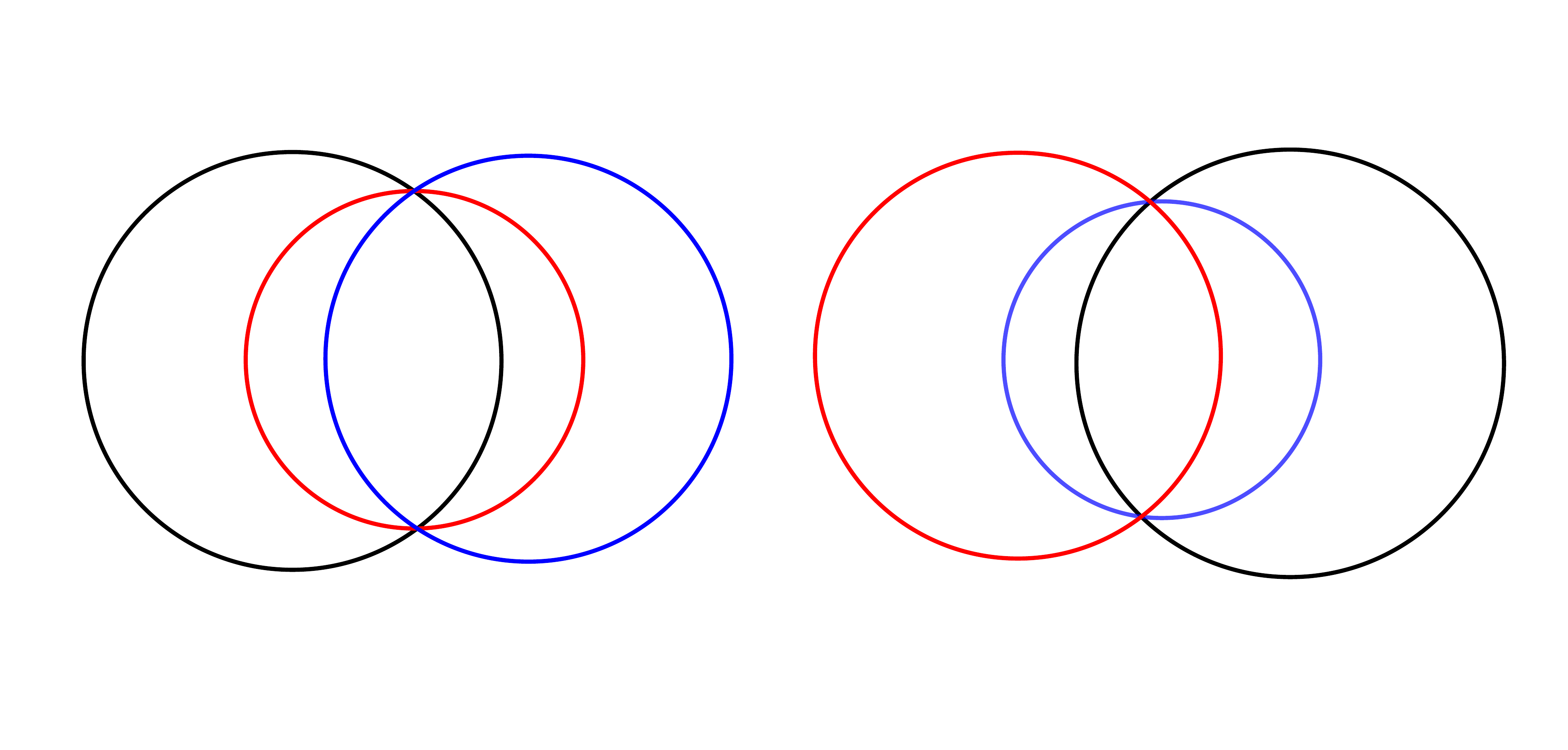} 

    \caption{First possibility. The fixed-point set of $h$ (red), $f$ (blue) and $fh'$ (black).} 

   \label{fig:linearAD} 

\end{figure} 

\begin{figure}[h] 

   \centering 

   \includegraphics[width=0.9\textwidth]{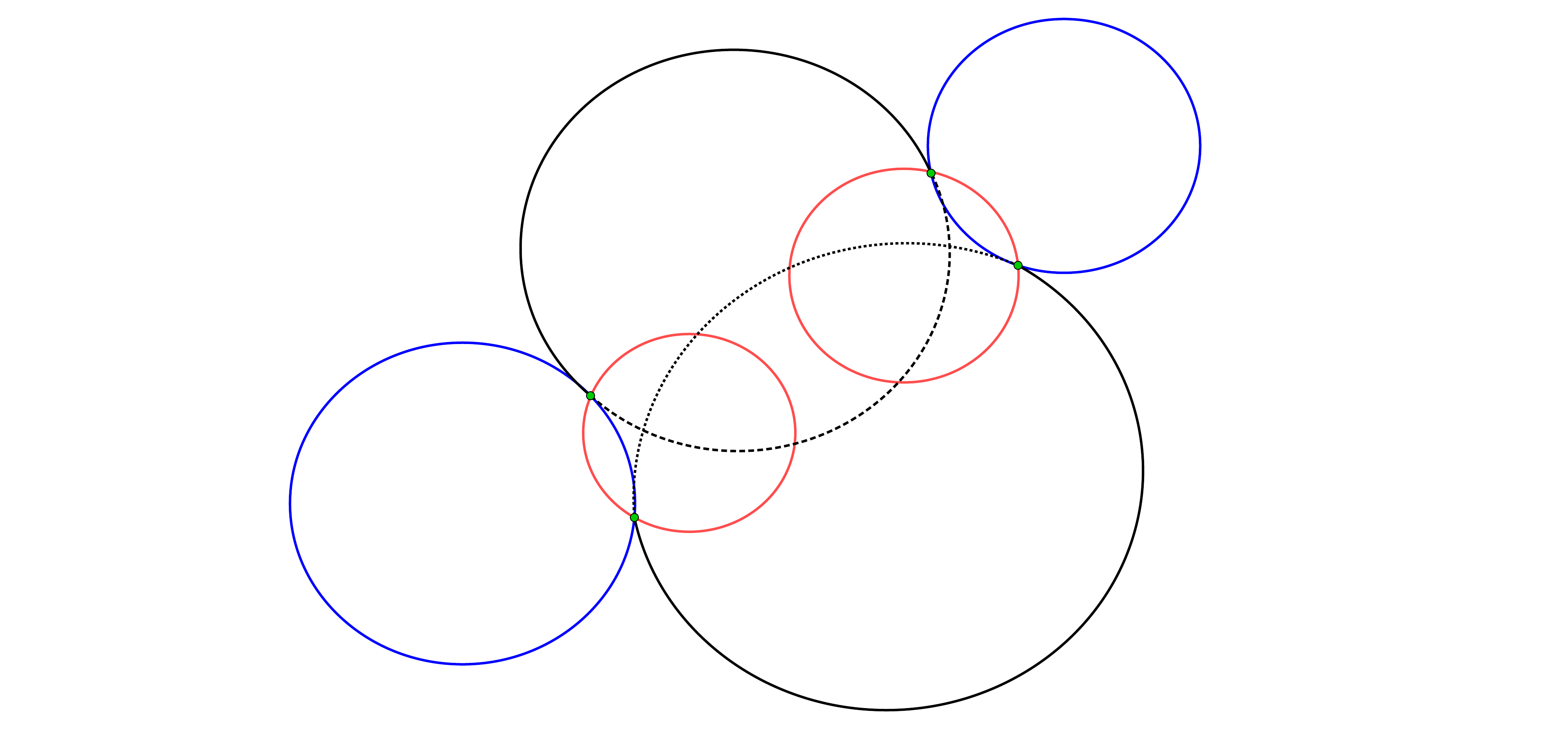} 

    \caption{Second possibility. The fixed-point set of $h$ (red), $f$ (blue) and $fh$ (black).} 

    \label{fig:linearAd1} 

\end{figure} 

The first one is where a component $K_1'$ meets $K_1$ and $\Tilde{K_1}$ in 2 points and a second component $K_2'$ meets $K_2$ and $\Tilde{K}_2$ in 2 points, see \cref{fig:linearAD}, to be more explicit: 
$$ K_1'\cap K_1\cap \Tilde{K}_1=\{P_1,\ P_2\} \quad  K_2'\cap K_2\cap \Tilde{K}_2=\{P_3, \ P_4\} $$
The second possibility is where both $K_1'$ and $K_2'$ meet $K_1$ and $\Tilde{K_1}$ in one point and they meet also $K_2$ and $\Tilde{K_2}$ in point, see \cref{fig:linearAd1}, to be precise 
\begin{align*} 
& K_1'\cap K_1\cap \Tilde{K}_1=P, \ \ K_2'\cap K_1\cap \Tilde{K}_1=P', \\ 
& K_1'\cap K_2\cap \Tilde{K}_2=\Tilde{P}, \ \  K_2'\cap K_2\cap \Tilde{K}_2=\overline{P}. 
\end{align*} 
The first possibility is impossible since it leads to a separation of $\overline{K}$ in similar fashion as already discussed.

Consider the second possibility. We can observe that  $Fix(f)$ has two components since any additional component would lead to a separation of $\overline{K}$ and the same holds for $Fix(fh).$
To complete the proof a similar argument can be applied when $K_1$ intersects two components of $Fix(f)$, each in a single point.

\end{proof}

\begin{proposition}\label{centralizer}

Let $S$ be a subgroup of $G$ and  $g\in G$ be an element whose  fixed-point set has two components.  Suppose that $S$  centralizes $g$, then  the following hold
\begin{enumerate}[label=(\roman*)]

\item if $p$ is an odd prime the Sylow $p$-subgroup of $S$ is normal in $S$;

\item $S$ is generated by four elements;

\item $S$ is solvable.

\end{enumerate}

\end{proposition}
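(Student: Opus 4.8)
The plan is to exploit the fact that centralizing $g$ forces $S$ to almost stabilize each component of $Fix(g)$, and then to read off all three conclusions from the generalized dihedral structure supplied by \cref{proposition: 2}.

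First I would record the geometric reduction. Write $Fix(g)=K_1\cup K_2$, where $K_1,K_2$ are disjoint simple closed curves (recall that fixed-point sets are unions of such curves). Since every $s\in S$ commutes with $g$, it maps $Fix(g)$ to itself: if $g(x)=x$ then $g(sx)=s(gx)=sx$. Being a diffeomorphism, $s$ therefore permutes the two components, which yields a homomorphism $S\to\mathrm{Sym}\{K_1,K_2\}\cong\Z_2$ whose kernel $S_0$ fixes both $K_1$ and $K_2$ set-wise and satisfies $[S:S_0]\le 2$. Applying \cref{proposition: 2} to the subgroup $S_0$ and the curve $K_1$, I obtain an abelian subgroup $H$ of rank at most two that is normal in $S_0$ with $[S_0:H]\le 2$. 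Thus I have a chain $H\trianglelefteq S_0\trianglelefteq S$ in which the successive indices $[S_0:H]$ and $[S:S_0]$ are each at most $2$, and $H$ is abelian of rank at most two.

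With this chain in hand the three statements follow quickly. For (iii), the factors $H$, $S_0/H$ and $S/S_0$ are all abelian, so $S$ is solvable (of derived length at most three). For (ii), $H$ is generated by at most two elements, and adjoining one coset representative for $H$ in $S_0$ and one for $S_0$ in $S$ produces a generating set of size at most four. For (i), fix an odd prime $p$: since $[S:H]\le 4$ is a power of two, a Sylow $p$-subgroup $P$ of $H$ is simultaneously a Sylow $p$-subgroup of $S_0$ and of $S$; as $H$ is abelian, $P$ is its unique, hence characteristic, Sylow $p$-subgroup, so $P\trianglelefteq S_0$, whence $P$ is the unique (and therefore characteristic) Sylow $p$-subgroup of $S_0$; finally $S_0\trianglelefteq S$ gives $P\trianglelefteq S$.

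The only real content lies in the geometric reduction of the first step, and the point to be careful about there is that commuting with $g$ guarantees only that $S$ preserves the \emph{unordered} pair $\{K_1,K_2\}$, not each curve individually — this is precisely why the index-two subgroup $S_0$ must be introduced before \cref{proposition: 2} can be invoked, and it is the source of one of the two factors of $2$ in the index bounds. Everything after that is elementary finite group theory built on the normal chain, so I do not anticipate any genuine obstacle; the solvability, generator, and Sylow-normality conclusions are all immediate consequences.
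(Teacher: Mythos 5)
Your proof is correct and follows essentially the same route as the paper: your $S_0$ and $H$ are exactly the paper's $H_3$ (the stabilizer of a component $K_1$, of index at most $2$ since elements of $S$ centralizing $g$ permute the two components of $Fix(g)$) and $H_2$ (the abelian group of $K_1$-rotations supplied by \cref{proposition: 2}), and all three conclusions are read off from the same subnormal chain $H_2\vartriangleleft H_3\vartriangleleft S$ with $2$-power indices. Your write-up is in fact slightly more explicit than the paper's on point (i), spelling out the characteristic-subgroup chain that the paper compresses into the observation that every odd-order $p$-subgroup lies in $H_3$, which has a unique Sylow $p$-subgroup.
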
 

\begin{proof} 

Let $K_1$ be one of the two components of the fixed-point set of $g$. Since $g$ is centralized by $S,$ all elements of $S$ fix $Fix(g)$ set-wise.   

Consider $H_3$ the subgroup generated by the elements of $S$ which fix $K_1$ set-wise.  Since $g$ has two components, $H_3$ is a normal subgroup of $S$ of index at most 2, which contains all elements of odd order of $S$. By Proposition \ref{proposition: 2}, $H_3$ is abelian or generalized dihedral group of rank at most 2. In any case, if we denote by  $H_2$ the subgroup of $K_1$-rotations, we have that $H_2$ is a normal abelian subgroup of $H_3$ of index at most 2, containing all the elements of odd order.  The rank of $H_2$ is at most two and this implies that there exists a set of generators with at most four elements. If $p$ is odd any $p$-subgroup of $S$ is contained in $H_3$ that has a unique Sylow $p$-subgroup, this implies the first item. By the  subnormal series $H_2 \vartriangleleft H_3 \vartriangleleft S$ we can obtain that $S$ is solvable.

\end{proof}

\begin{remark}
We consider the subgroups $H_3$  and $H_2$ introduced in the proof of the proposition. If we define also $H_1$ as the subgroup fixing pointwise the curve $K_1$ we obtain  the following subnormal series 
\begin{align*} 
H_1\vartriangleleft H_2 \vartriangleleft H_3 \vartriangleleft S, 
\end{align*} 
where $H_1$ is cyclic, $H_2/H_1$ is cyclic, $H_2$ has index at most 2 in $H_3$ and $H_3$ has index at most two in $S$.  Keeping in mind the properties  of this subnormal series will be usefull in the proof of Theorem \ref{main}. 
The elements that  exchange the two components of $Fix(g)$ will be called \textit{exchange elements}.

\end{remark}

The following Lemma is simply a  corollary of the Proposition \ref{centralizer} but in this form it is extensively used in the proof of Theorem  \ref{main}

\begin{lemma}\label{lemma: 2} 

If a 2-subgroup $S$ of $G$ contains in its center an element whose  fixed-point set has two components, then then the sectional rank of S is at most 4. 

\end{lemma}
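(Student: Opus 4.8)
The plan is to derive this lemma directly from Proposition \ref{centralizer}. Let $g$ denote the central element of $S$ whose fixed-point set has two components. Since $g$ lies in the center of $S$, the whole group $S$ centralizes $g$, so we may apply Proposition \ref{centralizer} with the role of the subgroup played by $S$ itself. The conclusion of part (ii) of that proposition tells us that $S$ is generated by four elements.

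The key observation is that for a finite \emph{2-group} the notions of rank and sectional rank are more tightly linked than in general, and in particular the bound on the number of generators carries over to every section. Concretely, I would exploit the subnormal series $H_2 \vartriangleleft H_3 \vartriangleleft S$ built in the proof of Proposition \ref{centralizer}, where $H_2$ is abelian of rank at most two, $H_3/H_2$ has order at most two, and $S/H_3$ has order at most two. First I would note that $H_2$, being an abelian 2-group of rank at most two, has sectional rank at most two: every subgroup (hence every section) of a rank-two abelian group is again abelian of rank at most two. Then I would track how the rank can grow as we pass to the extensions $H_3$ and $S$: each step adds a factor of order at most $2$ on top, and passing to a subquotient through such an extension can increase the minimal number of generators by at most the rank of the top factor, namely by at most one per step.

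More precisely, the plan is to show the following for an arbitrary 2-subgroup (section) $T$ of $S$. Intersecting $T$ with the series gives $T \cap H_2 \vartriangleleft T \cap H_3 \vartriangleleft T$, and the successive quotients embed into $H_2$, into $H_3/H_2$ (order $\le 2$), and into $S/H_3$ (order $\le 2$), respectively. Hence $T\cap H_2$ is abelian of rank at most two, $(T\cap H_3)/(T\cap H_2)$ has order at most two, and $T/(T\cap H_3)$ has order at most two. Using the standard fact that if $N \vartriangleleft P$ then the minimal number of generators of $P$ is at most that of $N$ plus that of $P/N$, I would conclude that $T$ is generated by at most $2+1+1 = 4$ elements. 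Since this holds for every 2-subgroup $T\le S$, the sectional rank of $S$ is at most four.

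The main obstacle, and the step requiring care, is the passage from the structure of the fixed series $H_2 \vartriangleleft H_3 \vartriangleleft S$ to a uniform bound valid for \emph{all} subgroups (the sectional, not merely the ordinary, rank). One must verify that intersecting with an arbitrary subgroup $T$ preserves the index bounds, that the resulting quotients retain rank at most two, one, one respectively, and that the generator-counting inequality across a normal series is applied correctly (it is subadditive over the factors of a normal series, which is exactly what is needed here). Once this bookkeeping is in place the bound of four is immediate; the content lies entirely in transporting the rank information along the series to every section rather than just to $S$ itself.
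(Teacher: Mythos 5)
Your proof is correct, and its engine is the same subnormal series $H_2 \vartriangleleft H_3 \vartriangleleft S$ that underlies Proposition~\ref{centralizer}; but you arrive at the bound by a more laborious route than the paper intends. The paper presents the lemma as an immediate corollary of Proposition~\ref{centralizer}, and the one-line observation you miss is this: since the element $g$ lies in the \emph{center} of $S$, the hypothesis of the proposition is inherited by \emph{every} subgroup $T\le S$ --- each such $T$ centralizes $g$ --- so item (ii) of the proposition, applied as a black box to $T$ rather than to $S$, directly gives that $T$ is generated by at most four elements; since the paper defines the sectional rank of a $2$-group as the maximum rank over its subgroups (and $d(T/N)\le d(T)$ handles quotients anyway), the lemma follows with no further work. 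You instead apply the proposition only to $S$ itself, correctly recognize that a generator bound for $S$ alone does not control its subgroups, and then repair this by reopening the proof of the proposition: intersecting the series with an arbitrary $T$, noting that $T\cap H_2$ is abelian of rank at most two, that $(T\cap H_3)/(T\cap H_2)$ and $T/(T\cap H_3)$ embed into factors of order at most two, and invoking subadditivity of the minimal number of generators along a normal series to get $d(T)\le 2+1+1=4$. All of these steps are valid (in effect you re-prove Proposition~\ref{centralizer}(ii) uniformly for the subgroups of $S$), and your version has the minor merit of being self-contained and of making the sectional transport explicit; but the detour is logically redundant once one notices that centrality of $g$ pushes the proposition's hypothesis down to every subgroup for free.
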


%--------------------------SECTION PROOF THEOREM 1-------------------

\section{Proof of Theorem \ref{main} } \label{main proof}

We come now the to the main result which is that the sectional 2-rank of $G$ is at most 4, i.e., every 2-subgroup is generated by at most four elements.  

Assume $S$ is a 2-subgroup of $G$ containing a hyperelliptic involution which has fixed-point set with two components.

\begin{remark}\label{Remark: 1} 
We can suppose that $S$ does not contain any involution with nonempty connected fixed-point set otherwise we can conclude by using a result due to Reni and Zimmerman, see \cite[Lemma 2.2]{reni2001finite} for the proof. 
\end{remark} 
\begin{remark}\label{Remark: 2} 
By Lemma \ref{lemma: 1} and the previous remark we can suppose any involution commuting with $h$ in $S$ has either fixed-point set with two components or empty one. 
\end{remark} 

To be able to prove Theorem \ref{main} we need the following proposition; the proof is discussed in \cite{macwilliams19702}: 

\begin{proposition} 

A 2-group with no elementary abelian normal subgroup of rank 3 has sectional rank at most 4. 

\end{proposition}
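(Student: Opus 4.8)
The plan is to reduce the hypothesis to a statement about normal abelian subgroups, locate a self-centralizing one, and then invoke a structure theorem, since the invariant in question resists any naive induction. First I would record that the hypothesis is equivalent to the assertion that every normal abelian subgroup of $P$ has rank at most $2$. Indeed, if $A\trianglelefteq P$ is abelian then $\Omega_1(A)$ is characteristic in $A$, hence normal in $P$, and elementary abelian of the same rank as $A$; and any normal elementary abelian subgroup $E$ of rank $\geq 3$, viewed as an $\F_2$-module for the $2$-group $P/C_P(E)$, admits a chain of submodules $0=E_0<E_1<\cdots$ (a $2$-group acting on a nonzero $\F_2$-module fixes a nonzero vector, so the flag is built by induction), and therefore contains a normal subgroup of rank exactly $3$. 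Thus ``no normal elementary abelian subgroup of order $8$'' is the same as ``no normal abelian subgroup of rank $3$,'' and I may assume every normal abelian subgroup of $P$ has rank $\leq 2$.

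Next I would exploit the sub-additivity of sectional rank along a normal series: for $N\trianglelefteq P$ and any subgroup $H\leq P$ one has $d(H)\leq d(H\cap N)+d(HN/N)$, so the sectional rank of $P$ is at most the sum of the sectional ranks of $N$ and $P/N$. Taking $A$ to be a maximal normal abelian subgroup, the standard fact $C_P(A)=A$ (otherwise a central element of $P/A$ would enlarge $A$ to a larger normal abelian subgroup) shows that $P/A$ embeds in $\Aut(A)$, while $A$ has sectional rank $\leq 2$. The tempting conclusion would be that the sectional rank of $P$ is at most $2$ plus the sectional rank of $P/A$, but this estimate is too weak: the Sylow $2$-subgroup of $\Aut(A)$ can itself have sectional rank as large as $4$ (for $A\cong(\Z_{2^n})^2$ with $n$ large, the diagonal torus already contributes a $2$-group of rank $4$). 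The key observation is that the condition ``no normal elementary abelian of rank $3$'' severely restricts the image of $P$ in $\Aut(A)$: a large elementary abelian section acting on $A$ would, together with suitable preimages, manufacture a normal elementary abelian subgroup of rank $3$. Making this restriction quantitative is exactly the difficulty.

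This is why the heart of the argument must be a genuine structure theorem rather than an induction: the hypothesis is inherited neither by subgroups (a subgroup may carry a normal rank-$3$ elementary abelian subgroup that is not normal in $P$) nor by quotients, so no clean inductive step is available. Following P.\ Hall and Thompson, I would pass to a critical subgroup $C$ of $P$, so that $\Phi(C)$ and $[C,C]$ lie in $Z(C)$ and $P$ acts faithfully on $C$, and then combine Hall's description of the $2$-groups all of whose characteristic abelian subgroups are cyclic --- the groups of symplectic type, central products of $Q_8$ and $D_8$ with a cyclic group --- with the absence of a normal rank-$3$ elementary abelian subgroup. This should force $P$ to possess a characteristic subgroup $R$ drawn from a short explicit list (cyclic, dihedral, quaternion, semidihedral, or a symplectic-type central product) with $P/R$ of small sectional rank.

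The last and main step, and the expected obstacle, is the case analysis: for each admissible isomorphism type of $A$ and each admissible image of $P$ in $\Aut(A)$ one must bound $d(H)$ over all subgroups $H\leq P$ and rule out the occurrence of a subgroup with $d(H)\geq 5$. Controlling the interaction between the rank-$\leq 2$ normal abelian subgroup and the action of the quotient, across the several shapes produced by the structure theorem, is the technically heavy core of MacWilliams' analysis; it is precisely this case-by-case verification that makes the statement delicate, and the reason we are content to invoke it here as a black box.
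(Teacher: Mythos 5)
Your proposal ends exactly where the paper does: the paper offers no proof of this proposition at all, quoting it as MacWilliams' four-generator theorem with a pointer to \cite{macwilliams19702}, which is precisely the black box you invoke for the case analysis, so in substance the two treatments coincide (your conjectured internal architecture via critical subgroups and Hall's symplectic-type theorem cannot be checked against the paper, which gives none). Your preliminary reductions are correct and even add value: the $\Omega_1$-plus-flag argument showing that ``no normal elementary abelian subgroup of rank $3$'' is equivalent to ``no normal abelian subgroup of rank $3$'' is exactly the bridge needed to quote MacWilliams' theorem (stated for normal abelian subgroups) in the form the paper uses, and the subadditivity $d(H)\leq d(H\cap N)+d(HN/N)$ and the self-centralizing property $C_P(A)=A$ of a maximal normal abelian subgroup are standard and correctly stated.
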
 

In addition, we will use the following result due to Harada, see \cite[p. 514, (8.14)]{suzuki1986group}: 

\begin{proposition} 
Let A be an elementary abelian 2-subgroup of S, suppose that A has rank 3 and $C_S(A)$ is equal to $A$, then the sectional rank of S is equal to 3.  
\end{proposition}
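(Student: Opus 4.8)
The plan is to separate the two inequalities hidden in the equality. The lower bound is immediate: $A$ is elementary abelian of rank $3$, so it is a subgroup of $S$ requiring exactly three generators, whence the sectional rank of $S$ is at least $3$. The whole content is therefore the upper bound, namely that every subgroup $H\le S$ can be generated by at most three elements; writing $d(H)$ for the minimal number of generators, I must show $d(H)\le 3$ for all $H\le S$, which in particular forces $S$ to have $2$-rank exactly $3$.

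The first step is to extract the structure of $S$ from the hypothesis $C_S(A)=A$. Taking $A$ to be normal in $S$ (the setting in which the statement is applied, $A$ being a normal elementary abelian subgroup), conjugation yields a homomorphism $S\to\Aut(A)\cong GL(3,2)$ with kernel $C_S(A)=A$, so $S/A$ embeds as a $2$-subgroup of $GL(3,2)$. Since $|GL(3,2)|=168=2^{3}\cdot 21$, a Sylow $2$-subgroup has order $8$, and the explicit model by upper unitriangular matrices shows it is dihedral of order $8$; hence $S/A$ is isomorphic to a subgroup of $D_8$. In particular $|S|\le 2^{6}$, the quotient $S/A$ has sectional rank at most $2$, and $S$ is pinned down as one of a short, explicit list of extensions of $\mathbb{F}_2^{3}$ by a $2$-subgroup of $GL(3,2)$, the module structure on $A$ being recorded by the embedding.

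I would then estimate $d(H)$ for an arbitrary $H\le S$ through $H_0=H\cap A$, which is elementary abelian of rank at most $3$, and $\overline H=HA/A\cong H/H_0$, a $2$-subgroup of $D_8$. The crude extension estimate $d(H)\le d(H_0)+d(\overline H)$ yields only $5$, so the real work is to recover a saving of two generators from the faithful action: because every non-identity element of $\overline H$ moves $A$, the subgroup $H$ produces nontrivial commutators lying in $\Phi(H)\cap A$, while $H_0$ is squeezed between these commutators and the fixed space $C_A(\overline H)$, which cuts down the number of independent generators. Running through the possibilities for $\overline H$ (the cyclic subgroups, the Klein four-subgroups, and the full $D_8$ inside the unitriangular group), and computing in each case $C_A(\overline H)$ together with the commutator space $[\overline H,A]\le\Phi(H)$, is what is meant to bring the count down to at most $3$.

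I expect the main obstacle to be exactly this last, case-dependent step, and more precisely the point at which one must rule out $4$-generated subgroups — equivalently, show that $S$ has no elementary abelian subgroup of rank $4$. This does not follow softly from $C_S(A)=A$ alone; it requires the explicit $2$-local analysis of $GL(3,2)$ acting on $\mathbb{F}_2^{3}$ and a careful bookkeeping of how the module structure restricts each individual subgroup, and it is there that all the difficulty of Harada's statement resides.
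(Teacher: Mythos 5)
Your proposal correctly locates the difficulty, but the step you defer to the end is not merely hard --- it is impossible, because the statement as printed is false and your plan is aimed at proving it. Concretely, let $A=\langle a_1,a_2,a_3\rangle\cong\mathbb{Z}_2^3$ and let $T=\langle t_1,t_2\rangle\cong\mathbb{Z}_2\times\mathbb{Z}_2$ act on $A$ by the transvections $t_i(a_3)=a_3a_i$, $t_i(a_j)=a_j$ for $j=1,2$; set $S=A\rtimes T$, a $2$-group of order $32$. The action of $T$ on $A$ is faithful, so $C_S(A)=A$ (and $A$ is even normal, indeed maximal among normal elementary abelian subgroups, since any elementary abelian subgroup containing $A$ lies in $C_S(A)=A$); yet $\langle a_1,a_2,t_1,t_2\rangle\cong\mathbb{Z}_2^4$, so the sectional rank of $S$ is $4$, not $3$. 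The example also exposes the incorrect inclusion in your third paragraph: $[\overline{H},A]\le\Phi(H)$ requires $A\le H$, since $\Phi(H)$ only captures squares and commutators of elements of $H$; for $H=\langle a_1,a_2,t_1,t_2\rangle$ one has $\Phi(H)=1$ while $[\overline{H},A]=\langle a_1,a_2\rangle$ and $H\cap A=C_A(\overline{H})$, so the hoped-for saving of two generators evaporates exactly in the critical configuration you flagged.

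Two remarks to compare with the paper. The paper offers no proof of this proposition: it is quoted as a result of Harada via Suzuki's book, so there is no internal argument to match, and the transcription ``equal to $3$'' is stronger than what can hold under these hypotheses; the form of Harada's result that is actually usable is that a self-centralizing normal elementary abelian subgroup of rank $3$ forces sectional rank at most $4$ --- and this weaker bound is all that the proof of Theorem~\ref{main} needs, since the proposition serves only to dispose of the case $C_S(A)=A$ so that one may assume $A<C_S(A)$. Your framework does prove that correct bound cleanly: if $H\le S$ and $H\cap A$ has rank at most $2$, then $d(H)\le d(H\cap A)+d(HA/A)\le 2+2=4$, because every subgroup of $\mathbb{D}_8$ is $2$-generated; if instead $A\le H$ and $H\neq A$, then $C_H(A)=A$ forces $[H,A]$ to be a nontrivial subgroup of $A\cap\Phi(H)$, so $A\Phi(H)/\Phi(H)$ has rank at most $2$ and $d(H)\le 2+d(H/A\Phi(H))\le 4$. (Note also that your opening reduction $S/A\hookrightarrow GL(3,2)$ needs $A$ normal, which the literal statement does not assume but which does hold in the paper's application.) So the honest conclusion is: your method salvages the inequality ``at most four,'' which my counterexample shows is sharp, but no bookkeeping over the subgroups of $\mathbb{D}_8$ can yield the stated equality.
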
 
Thanks to the previous propositions we may suppose that there exists $A$ an elementary abelian normal subgroup of $S$ of rank 3. In addition, we can suppose that $A$ is a proper subgroup of its centralizer $C_S(A)$. Moreover, since for all $g\in S$ we have $C_S(A)^g=C_S(A^g),$ we get that $C_S(A)$ is normal. \newline
\\ 
\textbf{Case 1} Suppose that $A$ contains an involution $h$ such that the fixed-point set has two components $K_1$ and $K_2$, and                 $M/\langle h\rangle$ is a $\mathbb{Z}_2$-homology 3-sphere. We notice that if $A$ contains a hyperelliptic involution we are in Case 1. \newline 
\\ 
\textbf{Case 1.1} Suppose that $A$ contains a $K_1$-reflection.

\smallskip

%It follows that since $A$ is a proper subgroup of $C_S(A)$ there exists an element $r$ in $C_S(A)$ different from $h$ which acts as rotation on $K_1.$ 

Let $D$ be the subgroup of $C_S(A)$ fixing $K_1$ set-wise and let $t$ be the $K_1$-reflection contained in $A$. Thus, we have that $ t\Tilde{h}t=\Tilde{h}^{-1}$ for all $\Tilde{h}\in\Tilde{H},$ where $\Tilde{H}$ is the  abelian subgroup of $D$ of $K_1$-rotations. Moreover, since $t$ is in $A$, all elements of $\Tilde{H}$ commute with $t,$ which is possible only if $\Tilde{H}$ is an elementary abelian 2-group of rank at most 2 and $D$ is an elementary subgroup of rank at most 3. 

We have that any element of $C_S(A)$ fixes $Fix(h)$ set-wise. Thus, any element not in $D$ is an exchange element and hence it follows that $D$ has index 2 in $C_S(A).$  This implies that $D$ has rank three, $C_S(A)$ has order 16 and  $A$ has index 2 in $C_S(A)$.

It is a simple verification that in this case $C_S(A)$ is an abelian group and since it has a subgroup isomorphic to $\mathbb{Z}_2^3$ it is either isomorphic to $\mathbb{Z}_2^4$ or to $\mathbb{Z}_2\times\mathbb{Z}_2\times\mathbb{Z}_4.$ \newline 
\\
\textbf{Case 1.1.1} We consider $C_S(A)\cong\mathbb{Z}_2\times\mathbb{Z}_2\times\mathbb{Z}_4;$ in this case  $A$ is the unique elementary subgroup of $C_S(A)$ of rank 3.
\smallskip

It is easy to see that $S/C_S(A)$ is isomorphic to a subgroup of $Aut(\mathbb{Z}_2^3)\cong GL(3,2)$. The order of $GL(3,2)$ is  $(2^3-1)(2^3-2)(2^3-2^2)=3\times 7\times 8.$ The subgroup of upper triangular matrices in $GL(3,2)$   is isomorphic to $\mathbb{D}_8,$ the dihedral group of order 8,  and is a Sylow 2-subgroup. 

Now let $S'$ be a subgroup of $S,$ thus $S/\left(C_S(A)\cap S'\right)$ is isomorphic to $S'C_S(A)/C_S(A),$ which is a subgroup of $S/C_S(A).$ If $C_S(A)\cap S'$ has rank at most 2, since $S'C_S(A)/C_S(A)$ has rank at most 2, we get that $S'$ can be generated by four elements. 
%Moreover by \cite[p. 82, (9.14)]{suzuki1982group} $Aut( \mathbb{Z}_2^3)$ is isomorphic to $GL(3,2).$ 

Hence, we may suppose that $C_S(A)\cap S'$ has rank 3 and thus is isomorphic either to $\mathbb{Z}_2^3$ or $\mathbb{Z}_2\times\mathbb{Z}_2\times\mathbb{Z}_4.$ 
We can observe that in both cases $A$ is a subgroup of $S'.$ Indeed if $C_S(A)\cap S'$ is isomorphic to $\mathbb{Z}_2^3$ we have that $C_S(A)\cap S'$ is equal to $A,$ since $C_S(A)$ has a unique subgroup isomorphic to $\mathbb{Z}_2^3,$ and in the case $C_S(A)\cap S'$ is isomorphic to $\mathbb{Z}_2\times\mathbb{Z}_2\times\mathbb{Z}_4$ then clearly $C_S(A)\cap S'$ is equal to $C_S(A)$.

Consider the upper triangular matrices of $GL(3,2):$ 
\begin{align*} 
T=\left\{ 
\Big(\begin{smallmatrix} 
1 & a & b\\ 
0 & 1& c\\ 
0 & 0 & 1 
\end{smallmatrix}
\Big) \big| \ a, \ b,\ c \in \mathbb{Z}_2\right\}. 
\end{align*}

Notice that $T$ is generated by $v=\Big(\begin{smallmatrix} 
1 & 0 & 1\\ 
0 & 1& 1\\ 
0 & 0 & 1 
\end{smallmatrix}\Big)$ and $u=\Big(\begin{smallmatrix} 
1 & 1 & 1\\ 
0 & 1& 1\\ 
0 & 0 & 1 
\end{smallmatrix}\Big). $ 
 
Moreover, we have that 
\begin{align*} 
u^2=\Big(\begin{smallmatrix} 
1 & 0 & 1\\ 
0 & 1& 0\\ 
0 & 0 & 1 
\end{smallmatrix}\Big), \ u^3=\Big(\begin{smallmatrix} 
1 & 1 & 0\\ 
0 & 1& 1\\ 
0 & 0 & 1 
\end{smallmatrix}\Big). 
\end{align*} 

From this it follows that the element $(0,0,1)$ has an orbit containing four elements under the action of $T.$ In addition $T$ has two subgroups isomorphic to $\mathbb{Z}_2\times\mathbb{Z}_2$:  
\begin{align*} 
T_1 &=\{1,\ u^2,\ v,\ u^2v\}= \left\{1,\ 
\Big(\begin{smallmatrix} 
1 & 0 & 1\\ 
 0 & 1& 0\\ 
0 & 0 & 1 
\end{smallmatrix}\Big) ,\ \Big(\begin{smallmatrix} 
1 & 0 & 1\\ 
0 & 1& 1\\ 
0 & 0 & 1 
\end{smallmatrix}\Big)  ,\ 
\Big(\begin{smallmatrix} 
1 & 0 & 0\\ 
0 & 1& 1\\ 
0 & 0 & 1 
\end{smallmatrix}\Big) \right\} \\ 
T_2 &=\{1,\ u^2,\ uv,\ u^3v\}=\left\{1,\ 
\Big(\begin{smallmatrix} 
1 & 0 & 1\\ 
0 & 1& 0\\ 
0 & 0 & 1 
\end{smallmatrix}\Big) ,\ 
\Big(\begin{smallmatrix} 
1 & 1 & 1\\ 
0 & 1& 0\\ 
0 & 0 & 1 
\end{smallmatrix}\Big), \ 
\Big(\begin{smallmatrix} 
1 & 1 & 0\\ 
0 & 1& 0\\ 
0 & 0 & 1 
\end{smallmatrix}\Big)   \right\} 
\end{align*} 
With an easy calculation we can see that under the action of $T_1$  the orbit of $(0,0,1)$ has four elements.  The action of $T_2$ instead gives in $\mathbb{Z}_2^3$ three orbits of two elements and two orbits of one element.

%the groups generated by order 2 elements of $T$ split the orbit of length 4 of $(0, 0, 1)$ in 2 orbits of length 2 or in 1 orbit of length 2 and 2 orbits of length 1. In cases where the orbit of length 4 splits into 3 orbits the orbit of $(0,1,0)$ has length 2. Thus, the action of an order 2 element of $T$ produces always 2 orbits of length 2 of $A.$   

\smallskip

In the following  we consider the action of $S'/\left(C_S(A)\cap S'\right)$ on $A.$
 
Suppose that $\overline{S'}=S'/\left(C_S(A)\cap S'\right)$ is isomorphic to $\mathbb{D}_8,$ therefore there exist $r$ and $x$ in $S'$ such that $r\left(C_S(A)\cap S'\right)$ and $x\left(C_S(A)\cap S'\right)$ generate $S'/\left(C_S(A)\cap S'\right)$ and have order 4 and 2 in the quotient respectively.

If $C_S(A)\cap S'$ is equal to $A$ by what seen about $T$ there exists an element $y$ of $A$ which has an orbit of length 4 and this orbit contains necessarily a set of  generators of $A.$ This implies that $r,\ x$ and $y$ generate $S'.$

If instead we suppose that $C_S(A)\cap S'$ equal to $C_S(A)$. We have an element $y$ in $A$ with an orbit of length 4. In addition, if we take $f$ an element of order 4 in $C_S(A), $ then we get that the orbit of $y$ and $f$ generate $C_S(A).$  It follows that $r,\ x,\ y, \ f$ generate $S'.$

If the action of  $\overline{S'}$ on $A$ has an orbit containing 4 involutions, we can use the same argument.  The two remaining cases are  $\overline{S'}\cong \mathbb{Z}_2$ or the action equivalent to the $T_2-$action in the matrix representation. 

Suppose that $\overline{S'}$ is isomorphic to $\mathbb{Z}_2.$ If $C_S(A)\cap S'$ is equal to $A$ we get an element $y$ with orbit $\{y,\ w\}.$ Moreover there exists $x$ in $S'$ such that its projection to $\overline{S'}$ generates the group and therefore we get that $x, y$ and $z$ generate $S',$ where $z$ is an element in $A$ which together with $y$ and $w$ generate $A.$

Similarly when $C_S(A)\cap S'$ is equal to $C_S(A)$ we can choose an element $y$ in $A$ with orbit $\{y,\ w\}$ and an involution $z$ such that $y,\ w,\ z, \ f $ generate $C_S(A),$ where $f$ is an element of order 4 in $C_S(A).$ In addition we can take $x$ in $S'$ such that coset $xC_S(A)$ generates $\overline{S'}$ and thus $x,\ y,\ z, \ f $ generate $S'.$ 

The last possible case is when $\overline{S'}$ is isomorphic to $\mathbb{Z}_2\times\mathbb{Z}_2$ and the action is the same as $T_2$, that is we have three non-trivial orbits of two elements. 

If $C_S(A)\cap S'=A$, we consider $y$ and $z$ two elements in $A$ contained in distinct orbits of 2 elements and $r$ and $x$ such that $rA$ and $xA$ generate  $\overline{S'}.$ The elements $r,\ x,\ y,\ z$ generate $S'.$ 

Finally, we suppose that $C_S(A)\cap S'=C_S(A)\cong \mathbb{Z}_2\times\mathbb{Z}_2\times\mathbb{Z}_4.$ We denote by $f$ an element of order 4 in $C_S(A)$. An element of order 4 can be either a $K_1-$rotation or an exchange element of $Fix(h)$. Since $A$ contains the $K_1-$reflection $t$ and $t$ commutes with $f$, then $f$ is an exchange element. The involution $f^2$ is central in $S'$ since it is the unique involution that is a square of an element in $C_S(A)$. We can suppose that $f^2$ acts freely  otherwise we can conclude by Lemma \ref{lemma: 2}. The only remaining possibility is that $f^2$ is a $K_1-$ rotation. 

We consider $C_{S'}(h)$. The action of each involution in $\overline{S'}$ have exactly two orbits of 2 elements.  Therefore $C_{S'}(h)$ is a subgroup of index 2 in $S'$ (if $h$ is central we are done by Lemma \ref{lemma: 2}).   Let $H_1,\ H_2,\ H_3$ be subgroups of $C_S'(h),$ where $H_1$  are the trivial $K_1$-rotations,  $H_2$ are the $K_1-$rotations and $H_3$ are all elements fixing $K_1$ set-wise. Then we have  
\begin{align*} 
H_1\vartriangleleft H_2 \vartriangleleft H_3 \vartriangleleft H_4=C_{S'}(h)\vartriangleleft H_5=S', 
\end{align*} 
with $H_i/H_{i-1}$ cyclic for $i=2, 3,  4,  5$ and for $i=3, 4, 5$ it has order 2. Therefore, we get a set of  five generators for $S'.$ Moreover if the element conjugate to $h$ is a $K_1$-reflection or an exchange element then we have easily a set of four generators. Thus, we can suppose that the conjugate element to $h$ acts as rotation on $K_1$. The only possibility is that $h$ is conjugate to $f^2h.$
We obtain that  $M/\langle f^2h\rangle$ is a $\mathbb{Z}_2$-homology 3-sphere (conjugate elements have diffeomorphic quotients).  The involution $h$ projects to an involution $\overline{h}$ of  $M/\langle f^2h\rangle$. Since $f^2h$ is a  $K_1-$rotation we obtain that  $\overline{h}$ has a non-connected fixed-point set and this is impossible. \newline
\\
\textbf{Case 1.1.2} Suppose $C_S(A)$ is an elementary abelian group of rank 4. 
\smallskip

By what said so far there exist  $f,\ t,\ s$ elements of $C_S(A),$ where $f$ is a $K_1-$rotation, $t$ is a $K_1-$reflection and $s$ is an exchange element of $Fix(h).$ \\ 
We have that $C_S(A)$ is generated by $\{h,\ f,\ t,\ s\}$ and in particular  
\begin{align*} 
C_S(A)=\{1,\ h,\ f,\ fh,\ t,\ th,\ tf,\ tfh,\ s,\ sh,\ sf,\ sfh,\ st, \ sth,\ stf,\ stfh  \}, 
\end{align*} 
which is a total of four $K_1$-rotations (including the identity), four $K_1$-reflections and eight exchange elements.

The reflections have nonempty fixed-point set and by Remark \ref{Remark: 2} we may suppose that the four reflections of $C_S(A)$ have fixed-point set with two components; more generally, we can suppose that the fixed-point set of the non-trivial elements set of $C_S(A)$ either is empty or  has two components. 
Now the two elements $f$ and $fh$, which act as non-trivial $K_1$-rotations, can have either both empty or one empty and one nonempty fixed-point set. In fact, the case where both have nonempty fixed-point set can be excluded by the fact that they are $K_1$-rotations. If they both have nonempty fixed-point set then $Fix(f)$ and $Fix(fh)$ have nonempty intersection, since otherwise we would get a separation of the fixed-point set of the involution acting on  $M/\langle h\rangle$ induced by  $f$. Therefore $Fix(f)$ and $Fix(fh)$ intersect $Fix(h)$; given that they have two components, it follows  that they necessarily meet $K_1,$ which is a contradiction.

\smallskip

We consider first the case where the fixed-point set of both $f$ and $fh$ is empty. If $g$ is an element of $C_S(h)$ we denote by $\overline{g}$ the action induced by $g$ on $M/\langle h\rangle.$ We may consider the following group $\{\overline{1}, \ 
\overline{f},\ \overline{s},\ \overline{fs} \}$ acting on $M/\langle h\rangle,$ which is isomorphic to $\mathbb{Z}_2\times\mathbb{Z}_2.$  We know by Lemma \ref{lemma MZ} since $Fix(\overline{f})$ is empty that $Fix(\overline{s})$ and $Fix(\overline{sf})$ are both nonempty with empty intersection. This implies that one among $Fix(s)$ and $Fix(sh)$ is nonempty. We can exclude the case where both are nonempty, given the fact that they are exchange elements: since $Fix(\overline{s})$ is connected, $Fix(s)$ and $Fix(sh)$ should  intersect $Fix(h)$ to avoid  a separation of $Fix(\overline{s}).$ The same argument can be applied to the pair $sf$ and $sfh$ and thus one of the two has nonempty fixed-point set.

We can consider the group $\{ \overline{1},\ \overline{f},\ \overline{st}, \ \overline{stf}\}$ acting on $M/\langle h\rangle$ and by the  same reasoning we get that one among $st$ and $sth$ and one among $stf$ and $stfh$ have nonempty fixed-point set.

We have obtained nine elements with nonempty fixed-point set:  h, four reflections and four exchange elements.  An element with nonempty fixed-point set cannot be conjugate to an element with empty fixed-point set. More generally, two conjugate elements in $S$ have fixed-point set with the same number components.  We get therefore that $S$ acts on this set of nine elements by conjugation and  we have one element in the center of $S$; finally, by Lemma \ref{lemma: 2} we can conclude.

\smallskip

Suppose that  one among $f$ and $fh$ has nonempty fixed-point set. Consider the two quadruplets of exchange elements  
\begin{align*} 
 E_1=\{s,\ sh,\ sf,\ sfh\}, \ \  E_2=\{st,\ sth,\ stf,\ stfh \} 
\end{align*} 

Consider the action of $\{\overline{1},\ \overline{s},\ \overline{f},\ \overline{sf} \}$ on $M/\langle h\rangle.$ By assumption  $Fix(\overline{f})$  is connected and using Lemma \ref{lemma MZ} we get that at least one among $\overline{s}$ and $\overline{sf}$ has nonempty fixed-point set. In the same way by considering the action of $\{\overline{1},\ \overline{st},\ \overline{f},\ \overline{fst} \}$ on $M/\langle h\rangle$ we obtain that at least one among $\overline{st}$ and $\overline{stf}$ has nonempty fixed-point set. 

If $\overline{s}$ has nonempty fixed-point set, by the same argument used above, we can obtain that exactly one between $s$ and $sh$ has nonempty fixed-point set. The same holds for the other exchange elements. Therefore, we get that there are one or two elements in $E_1$ with nonempty fixed-point set and the same for the set $E_2.$\\
\\ 
\textbf{Claim:} We cannot have 2 elements with nonempty fixed-point set in both $E_1$ and $E_2.$\\ 
\\ 
\textbf{Proof of the claim:} Suppose that there are 2 elements in $E_1$ and 2 elements in $E_2$ with nonempty fixed-point set. Now since the elements in $E_1$ and $E_2$ act as exchange element on $Fix(h)$ we have that for any $y$ in $E_1\cup E_2$ only one among $y$ and $yh$ can have nonempty fixed-point set. 

Moreover if we consider the actions of $\{\overline{1}, \ \overline{s}, \overline{f},\ \overline{sf}\}$ and $\{\overline{1}, \ \overline{st}, \overline{f},\ \overline{stf}\}$ on $M/\langle h\rangle$ we get that both groups have three involutions with nonempty fixed-point set meeting in exactly two points. Therefore, there exists $P$ in $M$ whose projection to $M/\langle h\rangle$ is fixed by $\overline{s}, \overline{f}$ and $\overline{sf}.$ 

Assume that $f$ has nonempty fixed-point set. The point  $P$ is fixed by $f$ and by either $s, \ sf$ or $sh,\ shf.$ Moreover this implies that either $s$ and $sf$ or $sh$ and $sfh$ act as reflection on a component of $Fix(f)$. Since   $C_S(A)$ is an elementary group of rank 4, it contains exchange elements of $Fix(h).$  This implies that an  involution of $C_S(A)$ acting as a reflection on a component of $Fix(f)$, acts as a reflection also on the other component. In the same way we can see that either $st,\ stf$ or $sth,\ stfh$ act as a reflection on $Fix(f)$.
 
Further  we consider the action of $\{\overline{1}, \ \overline{f}, \overline{t},\ \overline{tf}\}$ on $M/\langle h\rangle$; here each involution does not act freely on $M/\langle h\rangle$ and, again, we have two global fixed-points. It follows that either $t,\ tf$ or $th,\ tfh$ fix $Fix(f)$ component-wise and act on both of its components as a reflection. Finally, we obtain at least  six involutions acting as reflections on a single component of  $Fix(f)$ and this is impossible.  The same argument holds if $fh$ has nonempty fixed-point set. This concludes the proof of the claim.   

\medskip

We can therefore suppose that exactly one among $\overline{s},$  $\overline{sf}$ and exactly one among $\overline{st},$  $\overline{stf}$ have empty fixed-point set, since in the case where one in $E_1$ and two in $E_2$ or vice versa have nonempty fixed-point set leads again to an odd number of elements with nonempty fixed-point set. Thus, we can choose one exchange element out of each quadruple $E_1$ and $E_2$ inducing an involution acting freely on $M/\langle h\rangle$ and  consider the group generated by these two free involutions acting on $M/\langle h\rangle$, this is impossible by Lemma \ref{lemma MZ}.

\smallskip  
This concludes the case where $A$ contains a reflection $t$.\\ 
\\ 
\textbf{Case 1.2} Suppose now that $A$ does not contain a $K_1$-reflection. Hence $A$ contains a rotation $f$ and exchange element $s.$ We have that $A/\langle h\rangle$  is isomorphic to $\mathbb{Z}_2\times\mathbb{Z}_2$  and by Lemma~\ref{lemma MZ}   we may distinguish two cases. The first case is where the fixed-point sets of two involutions have nonempty fixed-point set with empty intersection, the second case where all three involutions have nonempty connected fixed-point set and all three intersect in exactly two points.

\smallskip

Suppose first we have two involutions with nonempty fixed-point set. We can see in similar fashion as already done that in this case $A$ has exactly three elements which have nonempty fixed-point set. $S$ acts by conjugation on these three elements and thus one of these elements lies in the center of $S$ and we can conclude by using Lemma \ref{lemma: 2}.

\smallskip

Suppose now that $A/\langle h\rangle$ has 3 involutions with nonempty fixed-point set. This implies that there exist $t_1,\ t_2,\ t_3$ in $A$ with nonempty fixed-point set, while $t_1h,\ t_2h,\ t_3h$ have empty fixed-point set. $S$ acts on the set $X=\{t_1,\ t_2,\ t_3,\ h\}. $ We claim that $h$ cannot be conjugate to any of $t_1,\ t_2,\ t_3.$ and thus $h$ lies in the center and, if the claim is true, we can conclude again by Lemma \ref{lemma: 2}.

To prove the claim, we may notice that by assumption  $\overline{t_1},\ \overline{t_2},\ \overline{t_3}$ have fixed-point set intersecting with each other in two points. % and thus we get $Fix(t_i)$ contains a reflection in $A,$ for $i=1,2,3.$ 
Suppose by contradiction that $h$ is conjugate to any other element of $X, $ i.e. $t_i^u=h,$ for some $i\in\{1,2,3\}$ and some $u$ in $S.$ By assumption some $t_k$, for $k$ different from $i$ acts as reflection on a component of $Fix(t_i).$ Thus there exists $P$ in $Fix(t_i)$ fixed by $t_k. $ Moreover it is immediate to see that $u(Fix(h))=Fix(t_i).$ Thus there exists $\Tilde{P}$ in $Fix(h)$ such that $u(\Tilde{P})=P$ which implies $u^{-1}t_ku(\Tilde{P})=\Tilde{P}, $ thus we have found an element of $A$ acting as reflection on the components of $Fix(h)$ contrary to the assumption. This proves the claim and hence concludes Case 1.\\ 
\\ 
\textbf{Case 2} The subgroup $A$ does not contain any hyperelliptic involution.
\smallskip

By hypothesis we know that $S$ contains a hyperelliptic involution with fixed-point set with two components; by the assumption of Case 2 we have that it is not contained in $A$.
Consider therefore $B$ the group generated by $A$ and by the hyperelliptic involution; it has order 16 and, by looking at the possible groups containing $\mathbb{Z}_2^3$ as a subgroup, we are left with  four groups. We can conclude that $B$ is isomorphic to $\mathbb{Z}_2^4$ or to $\mathbb{D}_8\times\mathbb{Z}_2,$ since the other two possibilities have only  elements of order 4 outside the subgroup isomorphic to $\mathbb{Z}_2^3.$\\  
\\ 
\textbf{Case 2.1} Suppose that $B$ is isomorphic to $\mathbb{D}_8\times\mathbb{Z}_2.$ 

\smallskip

we have the following presentation  
\begin{align*} 
B=\langle z, f, t| \ z^2=t^2=f^4=1, zf=fz, zt=tz, tft=f^{-1}\rangle. 
\end{align*}  
We have that $Z(B)=\{1,\ z, \ f^2, f^2z \}.$ \\ 
The following is the set of all elements of order 2 of which at least one is hyperelliptic: 
\begin{align*} 
I(B)=\{f^2,\ t, \ z,\ tf,\ tf^2,\ tf^3,\ zf^2,\ zt,\ ztf,\ ztf^2,\ ztf^3\} \end{align*} 
Moreover $B$ has two subgroups isomorphic to $\mathbb{Z}_2^3$ 
\begin{align*} 
A_1 &=\{1, \ f^2,\ z, \ f^2z,\ t, \ tf^2, \ tz,\ tf^2z \}\\ 
A_2 &=\{1, \ f^2,\ z, \ f^2z,\ tf, \ tf^3, \ tfz,\ tf^3z \}, 
\end{align*} 
and we can see that any element of $I(B)$ is contained in either $A_1$ or $A_2.$

We can observe that $f^2,\ z,\ f^2z$ are necessarily contained in $A$ and thus are not hyperelliptic and since they are in the center of $B$ we get by Lemma \ref{lemma: 1} and Remark \ref{Remark: 1} that they act freely on $M$ or they have fixed-point set with two components.

The conjugacy classes of $B$ which we are interested in are the following:
\begin{align*} 
 \{t, \ tf^2\}, \ \{tz, \ tf^2z\}, \ \{tf, \ tf^3\}, \ \{tfz, \ tf^3z\}. 
\end{align*}

The group $B$ contains a hyperelliptic involution with fixed-point set with two components; we  can suppose w.l.o.g. that this hyperelliptic involution  is  $t$. Therefore, the conjugate involution  $tf^2$ is also hyperelliptic with a 2-component fixed-point set and  $A_2=A.$

In addition, in the following cases we consider the diffeomorphism induced by an element of $S$ on a fixed  quotient of $M$ (that can be vary in different situations); we use the following notation:  if $g$ is an element normalizing the subgroup $C$ of $S$, we denote by $\overline{g}$ the diffeomorphism induced by $g$ on $M/C.$\\ 
\\ 
\textbf{Case 2.1.1} Suppose that $Fix(t)$ and $Fix(tf^2)$ do not intersect.

\smallskip

This implies that also $Fix(tf^2)$ and $Fix(f^2)$ do not intersect and moreover only one among $tf^2$ and $f^2$ has nonempty fixed-point set, since otherwise we get a separation of $Fix(\overline{f^2})=Fix(\overline{tf^2})$ in $M/\langle t\rangle.$ Hence $f^2$ acts freely, since by assumption $tf^2$ has nonempty fixed-point set.

We can observe that $tf^2$ is an exchange element of $Fix(t).$ Indeed $tf^2$ fixes $Fix(t)$ set-wise but if $tf^2$ fixes $Fix(t)$ component-wise then we can find a separation of $Fix(\overline{t})$ in $M/\langle tf^2\rangle.$ Vice versa $t$ is an exchange element for $Fix(tf^2)$ and therefore it follows straight forwardly that $f^2$ is an exchange element for both $Fix(t)$ and $Fix(tf^2).$
 
%Moreover this implies that one among the following subgroups fixes $Fix(t)$ component-wise: 
%\begin{align*} 
%\{1, \ t,\ z,\ tz  \} \ \ \ \ \{1,\ t,\ f^2z,\ tf^2z \}. 
%\end{align*} 
%Indeed since $z, f^2z$ are in the center they fix $Fix(t)$ set-wise, and if for example $z$ acts as exchange element then $f^2z$ fixes necessarily $Fix(t)$ component-wise, since $f^2$ as said acts as as exchange element; vice versa in the same way if $f^2z$ acts as exchange element then $z$ fixes $Fix(t)$ component-wise.
%\smallskip

Suppose first that in addition to $f^2$ also $z$ and $f^2z$ act freely on M and consider the action of $A_1/\langle t\rangle=\{\overline{1},\ \overline{f^2},\ \overline{z},\ \overline{f^2z} \}$, which is isomorphic to $\mathbb{Z}_2\times\mathbb{Z}_2$, on $M/\langle t\rangle.$ By Lemma \ref{lemma MZ} at least two among $\overline{f^2}=\overline{tf^2},\ \overline{z}=\overline{tz},\ \overline{f^2z}=\overline{tf^2z}$ have nonempty fixed-point set. Moreover since $tz$ and $tf^2z$ are conjugate, we get that $\overline{tf^2},\ \overline{tz}$ and $\overline{tf^2z}$ have nonempty fixed-point set. Hence $tf^2, \ tz$ and $tf^2z$ have nonempty fixed-point set. Moreover again by Lemma \ref{lemma MZ} there exists $P$ whose projection to $M/\langle t\rangle$ is fixed by $\overline{tf^2},$ $\overline{tz}$ and $\overline{tf^2z}$ and since we assume that $f^2,$ $z$ and $f^2z$ act freely we get that $P$ is fixed by $tf^2,$ $tz$ and $tf^2z.$ This last conclusion leads to a contradiction, since a point $P$ fixed by $tf^2$ and $tz$ is also fixed by $f^2z$ which by assumption acts freely. %Similarly, a point fixed by $tf^2$ and $tf^2z$ is also fixed by $z,$ which again is contrary to the assumption of $z$ acting freely. 
We can conclude that at least one among $f^2z$ and $z$ does not act freely.

Consider the case where only one among $z$ and $f^2z$ has nonempty fixed-point set. We  recall that by assumption $f^2$ has empty fixed-point set. Moreover $tf$ is conjugate to $tf^3$ and $tfz$ to $tf^3z$ in $B,$ thus we get that the number of elements with nonempty fixed-point set in $A$ is odd. Therefore, we are done since we get that an element among $z$ and $f^2z$ having nonempty fixed-point set is in the center of $S$ and thus we can conclude by Lemma \ref{lemma: 2}.

We can therefore suppose that $z$ and $zf^2$ have nonempty fixed-point set with two components. Consider the  action of $\{\overline{1},\ \overline{f^2},\ \overline{z},  \ \overline{f^2z}\}$ on $M/\langle tf^2\rangle.$ Now since $\overline{f^2}=\overline{t},$ and  $z,\ f^2z$ have nonempty fixed-point set we have that $\overline{f^2},\ \overline{z}, \ \overline{f^2z}$ have nonempty fixed-point set intersecting in two points. Moreover since $f^2$ acts freely there exists a point $P$ in $M$ fixed by $t$ whose projection is fixed also by $\overline{z}$ and $\overline{f^2z}.$ Hence $P$ is fixed by $z$ or $tf^2z$ and by $f^2z$ or $tz.$ The case where both representatives of $\overline{z}$ and $\overline{tf^2z}$ fix $P$ can be excluded since this would contradict the fact $tf^2$ is an exchange element for $Fix(t).$ If we suppose that $z$ fixes $P$ then we get that also $tz$ fixes $P$ and therefore $z$ and $tz$ act as reflections on a  component of $Fix(t);$ this implies that $z$ and $tz$ fix  $Fix(t)$ component-wise. Analogously, if we suppose that $tf^2z$ fixes $P$ so does $f^2z$ and both act as reflections on a  component of $Fix(t).$

Assume that $z$ and $tz$ act as  reflections on a component of $Fix(t).$ Thus since $f^2$ is an exchange element on $Fix(t)$ also $f^2z$ and $tf^2z$ are. In addition, we have that, since $tz$ and $tf^2z$ are conjugate, $tf^2z$ has nonempty fixed-point set. We know by assumption that $f^2z$ has nonempty fixed-point set therefore there exists $\Tilde{P}$ in $M$ fixed by $f^2z$ and $tf^2z$ since otherwise we would get a separation of the fixed-point set of the involution induced by $f^2z$ on $M/\langle t\rangle.$ Thus $\Tilde{P}$ is fixed also by $t$ and therefore also $f^2z$ and $tf^2z$ act as reflections on a component of $Fix(t)$ fixing component-wise $Fix(t)$, which is impossible.
 
The same argument can be applied if we suppose at first $f^2z$ and $tf^2z$ act as reflections instead of $z$ and $tz.$\\ 
\\ 
\textbf{Case 2.1.2 } Suppose now that $Fix(t)$ and $Fix(tf^2)$ have nonempty intersection and therefore $f^2$ has nonempty fixed-point set. 

\smallskip

Since the nonempty fixed-point set of $f^2$ has two components, if $f^2$ is in the center of $S$ we are done by Lemma \ref{lemma: 2}. Therefore, we suppose that $f^2$ is not in the center of $S$ and  is conjugate to another element of $A.$  We recall that $f^2,\ z,\ f^2z$ are in the center of $B$ and the conjugacy classes of $A$ in $B$ with more than one element are $\{tf,\ tf^3\}$ and $\{tfz,\ tf^3z\}$. This tells us that one among $f^2,\ z,\ f^2z$ must be in the center of $S$ and the remaining two are conjugate; indeed $A$ can have either three conjugacy classes of size 2 in $S$ or one conjugacy class of size 4 and one of size 2.

%We can suppose that exactly one among $f^2z$ and $z$ has nonempty fixed-point set and the element with empty fixed-point is in the center of $S$. This can be seen in following way. In the former case two among $f^2,\ z,\ f^2z$ form a conjugacy class of size 2, in the letter two among $f^2,\ z,\ f^2z$ are in the conjugacy class of size 4. Thus if we suppose that $f^2,\ z,\ f^2z$ have all nonempty fixed-point set we can conclude by Lemma \ref{lemma: 2}, while if both $z,\ f^2z$ have empty fixed-point set then necessarily $z$ is conjugate to $f^2z$ and $f^2$ is in the center and we can conclude again by Lemma \ref{lemma: 2}. 

%This can be seen by looking at conjugacy classes of $A$ in $B,$ which tell us that at least one among $f^2,\ z,\ f^2z$ must be in the center of $S$ and that either $f^2,\ z,\ f^2z$ are all 3 in the center or  2 are among $f^2,\ z,\ f^2z$  must be conjugated, thus if  $f^2,\ z,\ f^2z$ have nonempty fixed-point set we can conclude by \cref{lemma: 2}, while if both $f^2z$ and $z$ have empty fixed-point set then it follows that $f^2$ must be in the center of $S$ and we can conclude again by \cref{lemma: 2}. %This implies in addition that we can suppose element with empty fixed-point set among $z$ and $f^2z$ is in the center of $S.$\\ 

We claim now that $C_S(f^2)$ has index two in $S.$  By the previous discussion we have that either $f^2$ is conjugate to $z$ or $f^2$ is conjugate to $f^2z.$ Suppose first that $f^2$ is conjugate to $z$. Therefore, there exists $x$ in $S$ such that $(f^2)^x=z.$ Moreover we have that $(tf^3)^x=(tf)^x(f^2)^x=(tf)^xz.$ Now since $f^2z$ is in the center $(tf)^x$ cannot be equal to $f^2$ and thus we get that it must be equal to one among $tf,\ tf^3,\ tfz, \ tf^3z$. Suppose for example $(tf)^x=tf^3, $ then $(tf^3)^x=(tf)^x(f^2)^x=tf^3z$ and therefore $\{tf,\ tf^3,\ tfz, \ tf^3z\}$ is a conjugacy class in S and the conjugacy class of $f^2$ in $S$ contains two elements. Analogously we can conclude for the other cases. Suppose secondly that $f^2$ and $f^2z$ are conjugate and therefore $(tf^3)^x=(tf)^x(f^2)^x=(tf)^xf^2z.$ From which follows again that $\{tf,\ tf^3,\ tfz, \ tf^3z\}$ is a conjugacy class. This finishes the proof of the claim.

Let $L_1$ and $L_2$ be the components of $Fix(f^2).$  Let $H_1,\ H_2,\ H_3$ be subgroups of $C_S(f^2),$ where $H_1$  are the trivial $L_1$-rotations,  $H_2$ are the $L_1-$rotations and $H_3$ are all elements fixing $L_1$ set-wise. Then we have  
\begin{align*} 
H_1\vartriangleleft H_2 \vartriangleleft H_3 \vartriangleleft H_4=C_S(f^2)\vartriangleleft H_5=S, 
\end{align*} 
with $H_i/H_{i-1}$ cyclic for $i=2, 3,  4,  5$ and for $i=3, 4, 5$ it has order 2. Therefore, we get a set of five generators for $S.$ Moreover if the element conjugate to $f^2$ is a $L_1$-reflection or an exchange element then we have easily a set of four generators. Moreover, if $S'\subset S$ we can consider the subnormal series induced by $\{H_i\}$ to obtain that $S'$ is generated by a set of  four generators.  Thus, we can suppose that the conjugate element to $f^2$ acts as rotation on $L_1$. We note that we can repeat the same argument also for $L_2$. 

We may recall that $f^2$ is in the center of $B$, therefore  our hyperelliptic involution $t$ is in $C_S(f^2).$ We denote by  $\overline{f^2}$ the involution acting on $M/\langle t\rangle$ induced by $f^2.$  Since $M/\langle t\rangle$ is a 3-sphere, we have that  $\overline{f^2}$ has connected fixed-point set. By assumption $Fix(t)$ and $Fix(f^2)$ have nonempty intersection which implies that $t$ acts as a reflection on both components of $Fix(f^2)$, otherwise we have a separation of the fixed-point set of $\overline{f^2}$. 

We consider a successive quotient $N=M/\langle t, f^2\rangle$. Since we can obtain $M/\langle t, f^2\rangle$ as  $(M/\langle t \rangle)/\langle \overline{f^2}\rangle$, by the positive solution of Smith Conjecture we obtain that  $N$ is diffeomorphic to $S^3.$ We focus on  the projection of the fixed-point set of $t$, $f^2t$ and $f^2$ on $N$, which  we denote by $\Gamma$. The combinatorial structure of $\Gamma$ is represented by one of the trivalent graphs in Figure \ref{fig:trigraph}; the projection of the fixed-point sets of $t,$ $f^2,$  and $tf^2$ are respectively black, red and blue. 

\begin{figure}[h] 

   \centering 

   \includegraphics[width=0.8\textwidth]{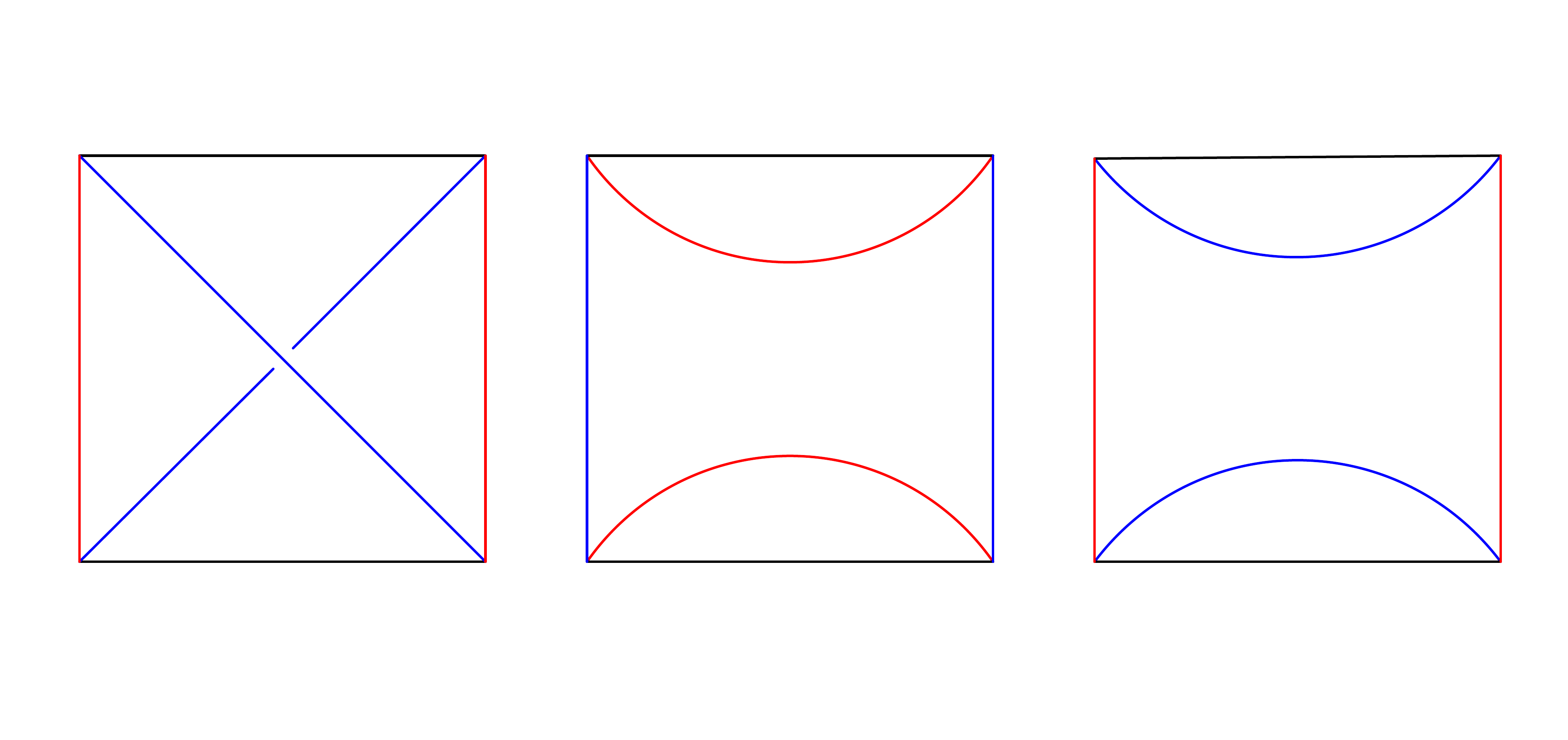} 

   \caption{The three possible combinatorial structures of $\Gamma$; the projection of the fixed-point sets of $t,$ $f^2,$  and $tf^2$ are respectively black, red and blue} 

    \label{fig:trigraph} 
\end{figure}

The manifold $M/\langle f^2 \rangle$ can be obtained as the cyclic branched cover along the union of the black and blue curves. If the union is a loop then $M/\langle f^2 \rangle$ is a $\mathbb{Z}_2-$homology sphere and we can conclude as in Case 1. There is only one remaining graph where the union gives two loops that is  the third one in Figure \ref{fig:trigraph}. We suppose that we have this graph. We denote by $r$ the rotation conjugate to $f^2$, which can be $z$ or $zf^2$.  In any case, $r$  is in the center of $B$ and commutes with all the elements of $\langle t, f^2\rangle$.  It induces an involution $\overline{r}$ acting  on $N$ that leaves invariant set-wise $\Gamma$. Moreover, since $r$ centralizes $\langle t, f^2\rangle$, the involution $\overline{r}$ fixes set-wise each set of curves of a fixed color. The fact that $z$ acts as a rotation on both components of $Fix(f^2)$ implies that each red curve is left invariant set-wise by  $\overline{r}$. This implies that $\overline{r}$ has to exchange the two black curves of $\Gamma$ and we can conclude that $r$ is an exchanging element of $Fix(t)$.

We consider now the quotient $N'=M/ \langle t, r\rangle$. It can be seen as the quotient  of $M/ \langle t \rangle$ by the action of  the involution induced by $r$. This implies that  $N'$ is a 3-sphere and the projections of the union of the fixed-point sets of the involutions in  $\langle t, r\rangle$ form a link with two components. In fact, the involution $tr$ acts freely (otherwise we get a separation of the fixed-point set of the involution induced by $r$) and the fixed-point set of $t$ (resp. $r$) projects  to one of the component of the link. We can obtain  $M/ \langle r\rangle$ as the 2-fold cyclic branched cover of   $N'$ along the projection of the fixed-point set of $t$. Finally, we obtain that $M/ \langle r\rangle$ is a  $\mathbb{Z}_2-$homology sphere and, since $r$ is in $A$, we can again conclude as in Case 1. This concludes the case $B$ isomorphic to $\mathbb{D}_8\times \mathbb{Z}_2$. 
%\smallskip

Indeed, we have just proved a  general claim that will be used again in the following of the proof:\\
\\ 
\textbf{Claim:}  If $A$ contains an involution not acting freely, commuting with a hyperelliptic involution and acting as an exchange element on the fixed-point set of the hyperellptic involution, we are in Case 1.
\newline 
\\ 
\textbf{Case 2.2} Assume now that $B$ is isomorphic to $\mathbb{Z}_2^4.$
\smallskip

We denote by $h$ the hyperelliptic involution in $B$. We are looking for a subgroup of $B$ which has order 8 and does not contain $h.$ We denote by $L$ one of the components of $Fix(h).$ We know by what done so far that $B$ is generated by $h,$ a non-trivial $L$-rotation $f$, an $L$-reflection $t$ and an exchange element $s.$\\ 
In particular 
\begin{align*} 
B=\{& 1,\ h, \ f,\ fh,\ t,\ th,\ tfh,\ tf, \ s,\ sfh,\ sh,\ sf,\ st,\ stfh,\ sth,\ stf \},
\end{align*} 
and we might consider the following sets: 

\begin{align*} 
&\{  h, \ f,\ fh\}, \  \{t,\ th,\ tfh,\ tf\},\  \\ 
&\{s,\ sfh,\ sh,\ sf,\ st,\ stfh,\ sth,\ stf \},
\end{align*} 
which are the $L$-rotations excluded the identity, the $L$-reflections and the exchange elements of $Fix(h)$, respectively. From now on we refer to these elements simply as rotations, reflections and exchange elements, respectively.

We can observe that any subgroup of $B$ of order $8$ contains a rotation since the product of two reflections gives a rotation and the product of two exchange elements gives either a reflection or a rotation.

Moreover two rotations generate all rotations and hence $h.$ This implies that any subgroup of $B$ of rank 3 not containing $h$ must contain an exchange element since if we have a subgroup generated by one rotation and two reflections then the two reflections give another rotation which is either equal to the given rotation and thus the subgroup has order four or the subgroup has two distinguished rotations and contains $h.$

We get therefore that any subgroup of $B$ of order 8 not containing $h$ is generated by a rotation, a reflection and an exchange element. In addition, this tells us that each such subgroup contains one non-trivial rotation, two reflections and four exchange elements.

Since each element in $B$ commutes with $h$, we can suppose that the involutions of $B$ not acting freely have a fixed-point set with two components.  By Lemma \ref{lemma: 2} we can suppose that none of these elements is central in $S$. Therefore, we can suppose that the elements with nonempty fixed-point set in $A$ are either two, four, or six, since in the case where we have an odd number we have a  central involution not acting freely. \\ \\
\textbf{Case 2.2.1} Suppose first that $A$ contains at least four elements not acting freely.
\smallskip

We have that at least one of these elements with nonempty fixed-point set is an exchange element. By using the claim stated in the end of  Case 2.2.1 we can conclude by Case 1.  \\
\\
\textbf{Case 2.2.2} Suppose secondly that $A$ contains two elements with nonempty fixed-point set. 
\smallskip

This implies that there  exists an element $g$ with nonempty fixed-point set such that $C_S(g)$ has index 2 in S.  

Now let $\{L_1,\ L_2\}$ be the components of the fixed-point set of $g$ and suppose that $A$ contains either a $L_1-$reflection or a $L_2-$reflection. Moreover in this case it follows that $C_S(A)$ is isomorphic to either $\mathbb{Z}_2^4$ or to $\mathbb{Z}_2\times \mathbb{Z}_2\times\mathbb{Z}_4$. Since the involution $h$ is not in $A$ but is in $C_S(A)$, we can conclude that   $C_S(A)$ is elementary abelian of rank 4 containing $h$ and we can repeat the same argument of the Case 1.1.2.

We can therefore suppose that $A$  contains neither a $L_1-$reflection nor a $L_2-$reflection and therefore $g'$ the element conjugate to $g$ acts as rotation on $Fix(g)$ since as done before we can construct a subnormal series from which if $g'$ is an exchange element we can conclude that has $S$ sectional rank of at most 4. We may observe that $h$ acts as exchange element or as reflection on the components of $Fix(g).$ In addition since in $A$ there is a single element acting as rotation on the components of $Fix(h)$ we may suppose that $g$ acts as exchange element $Fix(h)$ (otherwise we can consider $g'$ instead of $g$). Again by the claim stated at the end of Case 2.2.1 we can conclude. 

\rightline{\qedsymbol{}}  

\section{Simple groups}\label{simple groups}
Let $G$ be a simple group acting on $M$ containing an hyperelliptic involution $h$ such that the fixed-point set has two components. 
\\
The sectional 2-rank of $G$ is less or equal to four: we can use the Gorenstein-Harada Theorem, see   \cite[Theorem 8.12 and the note at page 517]{suzuki1986group}.
We have that $G$ is isomorphic to one the groups in the following list (where $q$ is an odd prime number):
\begin{itemize}
    \item $PSL(2,q)$ with  $q>3$, $PSL(3,q),$ $PSL(4,q)$ with $q\neq 1$ mod 8, $PSL(5,q)$  with $q=-1$ mod 8;
    \item $PSU(3,q),$ $PSU(4,q)$ with $q\neq -1$ mod 8, $PSU(5,q)$  with $q=1$ mod 8;
    \item $PSp(4,q);$ $G_2(q)$ with $q>3$; $D_4^3(q)$; $^2G_2(3^{2m+1})$;
    \item groups of Lie type of even characteristic: $PSL(2,8),$   $PSL(2,16),$ $PSL(3,4),$ $PSU(3,4),$ and $Sz(8)$;
    \item the alternating groups: $\mathbb{A}_n$, for $7\leq n\leq 11$;
    \item the sporadic groups: $M_{11},$ $M_{12},$ $M_{22},$ $M_{23},$ $J_1,$ $J_2,$ $J_3,$ $L_y,$ $M^c$;
\end{itemize}
We identify the groups in this list which have at least two conjugacy classes of involutions; these groups are:
\begin{align*}
  &PSL(4,q) \ for \ some \ q,\  PSL(5,q),\ PSU(4,q) \ for\ some\ q,\\ 
 &PSU(5,q), \ PS_p(4,q),\ \mathbb{A}_n\ for\ 8\leq n\leq 11; \ M_{12};\  J_2.  
\end{align*}
The number of conjugacy classes can be found in \cite[Table 4.5.1.]{GLS3} or \cite[Chapter 6.5]{suzuki1986group} for groups of Lie type of odd characteristic,  and in  \cite{Atlas} for  groups of Lie type of even characteristic and  sporadic groups. For the groups with at least two conjugacy classes of involutions, we have that  each involution has a  centralizer either non solvable or containing  a non-normal Sylow 3-subgroup. Hence, they cannot occur by Proposition \ref{centralizer}. To obtain the centralizers, see again \cite[Table 4.5.1.]{GLS3} or \cite[Chapter 6.5]{suzuki1986group} for groups of Lie type; for Alternating group the statement follows from direct computations;  the centralizers of the involutions of the sporadic groups are given in \cite[p. 253 and 268]{GLS3}.\\
Thus, we can assume that $G$ has a single conjugacy class of involutions. Moreover all involutions in $G$ are hyperelliptic. 
\begin{proposition}
The $2-$rank of $G$ is at most two.
\end{proposition}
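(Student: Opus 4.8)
The plan is to argue by contradiction, assuming $E\cong\mathbb{Z}_2^3$ is a subgroup of $G$ and extracting a contradiction from the hyperelliptic geometry. Since $G$ has a single conjugacy class of involutions and every involution is hyperelliptic with a two-component fixed-point set, each of the seven involutions of $E$ has $Fix$ a disjoint union of two circles, and for each involution $e$ the quotient $M/\langle e\rangle$ is homeomorphic to $S^3$. I would fix one involution $h\in E$, let $\pi\colon M\to M/\langle h\rangle\cong S^3$ be the projection, and write $L=\pi(Fix(h))=\ell_1\cup\ell_2$ for the branch link (two circles). The quotient $\overline E:=E/\langle h\rangle\cong\mathbb{Z}_2\times\mathbb{Z}_2$ acts on $S^3$, and I denote its three involutions by $\bar f,\bar g,\overline{fg}$. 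Because each lift of $\bar w$ is hyperelliptic we have $Fix(\bar w)=\pi(Fix(w))\cup\pi(Fix(wh))\neq\emptyset$; hence no involution of $\overline E$ is free, and \cref{lemma MZ} forces case (ii): the three fixed-point sets $c_f,c_g,c_{fg}$ are single circles forming a theta curve.

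The heart of the argument is a local analysis of the branched double cover $\pi$ restricted to each circle $c_w$ (with $w\in\{f,g,fg\}$), using that $c_w\cap L=\pi(Fix(w)\cap Fix(h))$ is exactly the branch set of $\pi^{-1}(c_w)\to c_w$ and that $|c_w\cap L|=|Fix(w)\cap Fix(h)|$. First I would rule out $c_w\cap L=\emptyset$: then $\pi^{-1}(c_w)\to c_w$ is an unbranched double cover of a circle, so $\pi^{-1}(c_w)$ is at most two circles, yet it contains the four pairwise disjoint circles of $Fix(w)\cup Fix(wh)$, which is impossible. Next I would rule out $|c_w\cap L|=2$: then $h$ has exactly two fixed points on the two-circle set $Fix(w)$, and since an involution of a circle has $0$ or $2$ fixed points and must preserve each component (swapping would give none), $h$ reflects one component $C$ and acts freely on the other $C'$; then $C'$ maps onto all of $c_w$ as an unbranched double cover, while the reflected component $C$ maps onto an arc of $c_w$ over whose interior it contributes two further preimages, giving four preimages over a generic point and contradicting that $\pi$ has degree two. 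Finally, $\ell_i\subseteq c_w$ is impossible, since it would force an involution $\neq h$ to fix $K_i$ pointwise, contrary to the remark after \cref{proposition: 2} that the pointwise stabiliser of a circle is cyclic. Consequently each intersection number $|c_w\cap\ell_i|$ lies in $\{0,2\}$, and the exclusions above leave only $|c_w\cap\ell_1|=|c_w\cap\ell_2|=2$: every $\bar w$ reflects each $\ell_i$ (orientation-reversingly, with two fixed points), and in particular preserves each $\ell_i$.

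Finally I would reach the contradiction on the single circle $\ell_1$. Each nontrivial element of $\overline E$ restricts to an orientation-reversing involution of $\ell_1$, but $\overline{fg}=\bar f\,\bar g$ is a product of two orientation-reversing maps, hence orientation-preserving, hence a nontrivial rotation of order two and so free on $\ell_1$; this contradicts the conclusion that $\overline{fg}$ reflects $\ell_1$. Therefore $E\cong\mathbb{Z}_2^3$ cannot occur and the $2$-rank of $G$ is at most two. I expect the main obstacle to be the middle step: correctly reading off, from the interaction between the theta curve in $S^3$ and the branch link $L$, the admissible intersection patterns, and using the degree of $\pi$ to show that the only surviving pattern (each $\bar w$ reflecting both components of $L$) is itself inconsistent. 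The configurations in which $Fix(w)$ meets $Fix(h)$ must be treated by this branched-cover and orientation bookkeeping rather than by the parity-of-fixed-point-sets counting used in \cref{main}, precisely because here all involutions are hyperelliptic and none acts freely.
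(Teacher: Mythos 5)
Your proposal is correct and takes essentially the same route as the paper: the paper's (very terse) proof likewise shows that every involution of the rank-three group other than $h$ must act as a reflection on both components of $Fix(h)$ --- because otherwise the projection of its fixed-point set to $M/\langle h\rangle\cong S^3$ would be disconnected, contradicting Smith theory --- and then derives the contradiction from the incompatibility of this with rank three, which is precisely your observation that the product of two reflections of a circle is orientation-preserving. Your branched-cover degree counting and the case analysis of $|c_w\cap\ell_i|$ simply make explicit the one-line ``separation'' argument the paper invokes, so this is a correct, fully detailed version of the paper's own proof rather than a different method.
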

\begin{proof}
    Suppose that $H$ is a subgroup of $G$ which is elementary abelian of rank three. Let $h $ be an element of $H$. All elements $f$ in $H$ different from $h$ act as a reflection on both components of $Fix(h),$ since otherwise we obtain a separation of projection of $Fix(f)$ to $M/\langle h\rangle,$ which is connected. However, this is impossible, since by assumption $H$ has rank three. 
\end{proof}
This result allows us to further restrict the possible groups. We can use \cite[Theorem 6.8.16]{suzuki1986group} and we obtain that $G$ is isomorphic to one of the following groups: 
\begin{align*}
    PSL(2,q),\ PSL(3,q),\ PSU(3,q), \ PSU(3,4),\ \mathbb{A}_7, \ M_{11}, 
\end{align*}
where $q$ is an odd prime power.\\
The groups  $PSL(3,q)$ and  $PSU(3,q)$ can be excluded  again by Proposition \ref{centralizer} (the centralizers of the involutions are described again in \cite[Table 4.5.1.]{GLS3} and \cite[Chapter 6.5]{suzuki1986group}. The group  $PSU(3,4)$ can be excluded since the Sylow 5-subgroup of the centralizer of an involution is not normal (by an explicit computation using Sage). The centralizer of an involution of $M_{11}$ is isomorphic $GL(2,3)$ (see  \cite[p.262]{GLS3}) whose Sylow 3-subgroups are not normal.\\
Moreover, we have the following result:
\begin{proposition}
The alternating group $\mathbb{A}_7$ cannot act on $M$ and contain a hyperelliptic involution with two components. 
\end{proposition}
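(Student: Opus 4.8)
The plan is to derive a contradiction from the dihedral structure of a Sylow $2$-subgroup of $\mathbb{A}_7$ together with the non-emptiness of intersections of fixed-point sets forced by Lemma \ref{lemma: 1}. By the reductions preceding the statement we may assume that every involution of $G=\mathbb{A}_7$ is hyperelliptic with fixed-point set having exactly two components. Since the Sylow $2$-subgroups of $\mathbb{A}_7$ are dihedral of order $8$ and all involutions are conjugate, we may take $a=(12)(34)$ to be the central involution of a Sylow $2$-subgroup $P\cong\mathbb{D}_8$; its centralizer $C_G(a)$ has order $24$ and contains the element $\tau=(567)$ of order $3$. Write $Fix(a)=K_1\cup K_2$, note $M/\langle a\rangle\cong S^3$, and let $D\le C_G(a)$ be the subgroup fixing $K_1$ set-wise, with the subnormal series $H_1\vartriangleleft H_2\vartriangleleft H_3=D$ of Proposition \ref{centralizer} and its remark.

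First I would look at the cyclic subgroup $\langle r\rangle\cong\mathbb{Z}_4$ of $P$ with $r^2=a$ and show that $r$ must be an exchange element of $Fix(a)$, so that $r\notin D$. Indeed $r$ cannot be a $K_1$- or $K_2$-reflection, since reflections are involutions by Proposition \ref{proposition: 2}. If $r$ were a $K_1$-rotation, then it would lie in the abelian rotation subgroup $H_2$ of $D$, which by Proposition \ref{centralizer} contains every element of odd order of $C_G(a)$, in particular $\tau$. Then $r$ and $\tau$ would commute and $r\tau$ would have order $12$, which is impossible because $\mathbb{A}_7$ has no element of order $12$. Hence $r$ exchanges $K_1$ and $K_2$ and $r\notin D$.

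Next I would prove the opposite inclusion, namely that every involution of $P$ lies in $D$. Each involution $w\in P$ commutes with the central involution $a$, so it preserves $\{K_1,K_2\}$, and $aw$ is again an involution of $G$, hence hyperelliptic with non-empty fixed-point set. By the argument in the proof of Lemma \ref{lemma: 1}, the intersection $Fix(a)\cap Fix(w)$ cannot be empty: otherwise the images of $Fix(w)$ and $Fix(aw)$ would give a separation of the connected fixed-point set of the involution induced by $w$ on $M/\langle a\rangle\cong S^3$, forcing one of $Fix(w)$, $Fix(aw)$ to be empty, against hyperellipticity. Thus $w$ fixes a point of $K_1\cup K_2$, so it does not exchange the two components and fixes each of them set-wise; hence $w\in D$.

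To conclude, I would note that the involutions of $P\cong\mathbb{D}_8$ generate $P$ — a reflection from each of its two Klein four-subgroups already has an order-$4$ product and so generates the whole group. Since $D\cap P$ is a subgroup of $P$ containing all its involutions, it equals $P$, and in particular $r\in D$, contradicting the second paragraph. The step I expect to be the main obstacle is precisely this identification of $r$ as an exchange element: it is where the arithmetic of $\mathbb{A}_7$ (absence of elements of order $12$) must be combined with the structural constraints of Proposition \ref{centralizer} and \ref{proposition: 2}; once this is in place, the remainder is a short incidence and generation argument inside $\mathbb{D}_8$.
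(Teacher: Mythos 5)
Your proof is correct, but it follows a genuinely different route from the paper's. The paper argues inside the subgroup $\langle t,s,f\rangle\cong\mathbb{Z}_2\times\mathbb{Z}_2\times\mathbb{Z}_3$, where $t=(1,2)(3,4)$, $s=(1,3)(2,4)$ and $f=(5,6,7)$: since $f$ has order three it must act as a rotation on every component of $Fix(t)$, $Fix(s)$ and $Fix(st)$, which forces the four points of $Fix(t)\cap Fix(s)\cap Fix(st)$ into the pattern $K_1\cap L_1\cap N_1=\{P,f(P)\}$, $K_2\cap L_2\cap N_2=\{Q,f(Q)\}$; this is exactly the ``first possibility'' ruled out inside the proof of Lemma~\ref{lemma: 1}. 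You instead exploit the dihedral Sylow $2$-subgroup $P\cong\mathbb{D}_8$ of $\mathbb{A}_7$ and set up a clean dichotomy: the order-four element $r$ with $r^2=a$ must exchange the two components of $Fix(a)$ (a reflection has order two by Proposition~\ref{proposition: 2}, and a $K_1$-rotation of order four would lie in the abelian rotation subgroup $H_2$ of Proposition~\ref{centralizer} together with the odd-order element $\tau=(5,6,7)$, yielding an element of order $12$, which $\mathbb{A}_7$ does not possess), while every involution $w$ of $P$ must preserve each component, because the separation argument on the connected set $Fix(\overline{w})\subset M/\langle a\rangle$ forces $Fix(a)\cap Fix(w)\neq\emptyset$, both $Fix(w)$ and $Fix(aw)$ being nonempty by the single conjugacy class of (hyperelliptic) involutions; since the involutions of $\mathbb{D}_8$ generate it, $r$ would preserve the components as well, a contradiction. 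All your verifications check out ($|C_{\mathbb{A}_7}((1,2)(3,4))|=24$; the element orders of $\mathbb{A}_7$ are $1,\dots,7$; two reflections from the two Klein four-subgroups of $\mathbb{D}_8$ generate it). As for what each approach buys: the paper's proof reuses the detailed intersection analysis already carried out for Lemma~\ref{lemma: 1}, but at the price of citing an intermediate configuration from inside that proof rather than a stated result; your proof relies only on statement-level inputs (Propositions~\ref{proposition: 2} and~\ref{centralizer}, connectivity of fixed-point sets of involutions of $S^3$) plus elementary arithmetic of $\mathbb{A}_7$, and the same scheme would exclude any group with dihedral Sylow $2$-subgroups, a single class of involutions all hyperelliptic with two-component fixed-point sets, and an order-three element centralizing the central involution but no element of order $12$ --- so it is arguably more portable, at the cost of not localizing the geometric obstruction as explicitly as the paper does.
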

\begin{proof}
    Suppose $\mathbb{A}_7$ acts on $M$ and contains a hyperelliptic involution with two components. Consider $t=(1,2)(3,4)$, $s=(1,3)(2,4)$, $st=(1,4)(2,3)$ and $f=(5,6,7).$ We have that $t,$ $s$ and $st$ have fixed-point set with two components. Moreover $t,$ $s$ and $f$ generate a subgroup of $\mathbb{A}_7$ isomorphic to $\mathbb{Z}_2\times\mathbb{Z}_2\times\mathbb{Z}_3.$ Thus $f$ fixes the fixed-point set of $t$, $s$ and $st$ set-wise. Since $f$ is an element of order three we get that $f$ acts necessarily as rotation on each component of $t$, $s$ and $st.$ In addition we know that the set $Fix(t)\cap Fix(s)\cap Fix(st)$ contains exactly four points. Suppose that $\{K_1,K_2\}$ are the connected components of $Fix(t)$, $\{L_1,L_2\}$ the connected components of $Fix(s)$ and $\{N_1,N_2\}$ the connected components of $Fix(st).$ If $P$ is in $K_1\cap L_1\cap N_1$ then so is $f(P)$, since $f$ acts as rotation on $K_1, L_1$ and $N_1.$ Similarly given a point $Q$ in $K_2\cap L_2\cap N_2$ we have that $f(Q)$ is in $K_2\cap L_2\cap N_2$. What we have obtained is 
    \begin{align*}
        &K_1\cap L_1\cap N_1=\{P, f(P)\} \\
        & K_2\cap L_2\cap N_2=\{Q, f(Q)\}.
    \end{align*}
    However it has been shown in the proof of Lemma \ref{lemma: 1} that this is impossible.
\end{proof}
Hence, we have obtained that the only possible simple groups are the linear fractional groups $PSL(2,q),$ where $q$ is an odd prime power.

\subsection*{Declarations}
The first author is supported by the School of Mathematics at the University of Birmingham. The second author is member of the national research group GNSAGA. The authors have no conflicts of interest to declare that are relevant to the content of this article. Our research did not generate or reuse research data

%\cleardoublepage
\bibliographystyle{alpha}
\bibliography{references}

\end{document}